\def\R{\mathbb{R}}
\newcommand{\Rmnum}[1]{\expandafter\@slowromancap\romannumeral #1@}
\newtheorem{thm}{Theorem}[section]
\newcommand{\norm}[1]{\left\lVert#1\right\rVert}
\newtheorem{lemma}[thm]{Lemma}
\newtheorem{theorem}[thm]{Theorem}
\newcommand{\abs}[1]{\left\vert#1\right\vert}
\begin{document}
\author{Hai-Yang Jin}
\address{Department of Mathematics, South China University of Technology, Guangzhou 510640, China}
\email{mahyjin@scut.edu.cn}

\author{Zhi-An Wang}
\address{Department of Applied Mathematics, Hong Kong Polytechnic University, Hung Hom,
 Hong Kong}
\email{mawza@polyu.edu.hk}

\title[Critical mass on the Keller-Segel system with signal-dependent motility]{Critical mass on the Keller-Segel system with signal-dependent motility}

\begin{abstract}
This paper is concerned with the global boundedness and blowup of solutions to the Keller-Segel system with density-dependent motility in a two-dimensional bounded smooth domain with Neumman boundary conditions. We show that if the motility function decays exponentially, then a critical mass phenomenon similar to the minimal Keller-Segel model will arise. That is there is a number $m_*>0$, such that the solution will globally exist with uniform-in-time bound if the initial cell mass (i.e. $L^1$-norm of the initial value of cell density) is less than $m_*$, while the solution may blow up if  the initial cell mass is greater than $m_*$.

%We consider the original Keller-Segel system with signal-dependent motility \cite{KS-1971-JTB2}:
%\begin{equation*}
%\begin{cases}
%u_t=\nabla \cdot(\gamma(v)\nabla u-u\phi(v)\nabla v), &x\in \Omega, ~~t>0,\\
%v_t=\Delta v+u- v,& x\in \Omega, ~~t>0,\\
%u(x,0)=u_0(x),~v(x,0)=v_0(x), & x\in \Omega,
%\end{cases}
%\end{equation*}
%in a bounded domain $\Omega\subset \R^2$ with smooth boundary subject to homogeneous Neumann boundary conditions, where \begin{equation*}\label{KS-1}
%\phi(v)=(\alpha-1)\gamma'(v).
%\end{equation*}
%When $\alpha=0$ and $\gamma(v)=e^{-\chi v}$ with $\chi>0$, by constructing a  Lyapunov functional, {\color{black} we find a critical mass $m_*=\frac{4\pi}{\chi}$ in the sense that
%\begin{itemize}
%\item If $\int_\Omega u_0 dx<m_*$, the globally bounded solution exists;
%\item For any $M>m_*$ and $M\not \in \{\frac{4\pi m}{\chi}: m\in\mathbb{N}^+\}$ with  $\mathbb{N}^+$ denotes the set of positive integers,  there exist initial data $(u_0,v_0)$ satisfying $\int_{\Omega} u_0dx=M$ such that the corresponding solution blows up.
%\end{itemize}}

\end{abstract}

\subjclass[2000]{35A01, 35B44, 35K57, 35Q92, 92C17}

\keywords{Signal-Dependent motility, global existence, blow-up, critical mass}

   \maketitle

\numberwithin{equation}{section}
\section{Introduction}
To show how individual cell paths can result in an average cell flux proportional to the macroscopic chemical gradient,  Keller and Segel derived the following system based on a Brownian motion model of chemotaxis model in their seminal work \cite{KS-1971-JTB2}:
\begin{equation}\label{KS}
\begin{cases}
u_t=\nabla \cdot(\gamma(v)\nabla u-u\phi(v)\nabla v),\\
v_t=\Delta v+u-v,\\
\end{cases}
\end{equation}
where $u$ denotes the cell density and $v$ stands for the concentration of the chemical signal emitted by cells.  $\gamma(v)>0$ is the diffusion coefficient and $\chi(v)$ is called the chemotactic coefficient, both of them depend on the chemical signal concentration and satisfy  the following proportionality relation:
\begin{equation}\label{KS-1}
\phi(v)=(\alpha-1)\gamma'(v),
\end{equation}
where $\alpha$ denotes the ratio of effective body length (i.e. maximal distance between receptors) to cell step size. {\color{black} We refer the detailed derivation of \eqref{KS}-\eqref{KS-1} to \cite{KS-1971-JTB2}.} The prominent feature of the Keller-Segel system \eqref{KS} is that two coefficient $\gamma(v)$  and  $\phi(v)$ depend on the chemical signal concentration and have a prescribed relationship to each other.  {\color{black} Recently this proportionality relation with $\alpha=0$ and $\gamma'(v)<0$ has been advocated as ``density-suppressed motility mechanism'' to interpret the stripe pattern formation of engineered {\it Escherichia Coli} in \cite{Fu-PRL-2011, Liu-Science}, which will be elaborated later. Such signal-dependent motility mechanism has also been used in preytaxis to describe the spatially inhomogeneous distribution of coexistence in the predator-prey system (see \cite{Kareiva, JWEJAM}). There are some other chemotaxis models where the diffusive and chemotactic coefficients depend on the chemical concentration gradient (cf. \cite{Burni-Chouhad}) or cell density (cf. \cite{Wrzosek}), which clearly have different modeling point of view from the system \eqref{KS}-\eqref{KS-1}}.

The study of Keller-Segel system \eqref{KS} was started with simplified cases. If $\gamma(v)=1$ and $\phi(v)=\chi>0$ ($\chi$ is a constant), the system \eqref{KS} is simplified to the so-called minimal Keller-Segel (abbreviated as KS) model:
\begin{equation}\label{CKS}
\begin{cases}
u_t=\Delta u-\chi \nabla \cdot(u\nabla v),&x\in\Omega,t>0,\\
v_t=\Delta v+u-v, &x\in\Omega,t>0,\\
\end{cases}
\end{equation}
where $\Omega$ is a bounded domain in $\R^n$ with smooth boundary. Under homogeneous Neumann boundary conditions, the dynamics of \eqref{CKS} such as boundedness, blow-up and pattern formation have been extensively studied, see the review papers \cite{BBTW-M3AS-2015,HP-JMB-2009,H-Review-1,H-Review-2} for more details. The most prominent phenomenon is the existence of critical mass depending on the space dimensions. Precisely, the global bounded solutions exists in one dimension \cite{KA-FE-2001}. In space of two dimensions ($n=2$), there exists a critical mass $m_*=\frac{4\pi}{\chi}$ such that the solution is bounded and asymptotically converges to its unique constant equilibrium if $\int_\Omega u_0 dx<m_*$ \cite{Nagai-Funk, WWY} and blows up if $\int_\Omega u_0 dx>m_*$ \cite{horstmann2001blow}, where $\int_\Omega u_0 dx$ denotes the initial cell mass. In the higher dimensions ($n\geq 3$), for any $\int_\Omega u_0dx>0$, the solution may blow up in finite time \cite{W-JMPA-2013}. The mathematical analysis for the KS model \eqref{CKS} on the boundedness {\it vs.} blowup was essentially based on the following Lyapunov functional
\begin{equation}\label{3-2}
\begin{split}
F(u,v)=&\int_{\Omega}u\ln udx +\frac{\chi}{2}\int_\Omega ( v^2+\abs{\nabla v}^2)dx-\chi\int_{\Omega}uvdx.
\end{split}
\end{equation}
{\color{black} If $\gamma(v)=1$ and $\phi(v)=\frac{\chi}{v}$, the system \eqref{KS} becomes the so-called singular Keller-Segel system:
\begin{equation}\label{CKSN}
\begin{cases}
u_t=\Delta u-\chi \nabla \cdot(\frac{u}{v}\nabla v),&x\in\Omega,t>0,\\
v_t=\Delta v+u-v, &x\in\Omega,t>0,\\
\end{cases}
\end{equation}
and there are various results in the literature indicating the non-existence of blow-up of solutions. With homogeneous Neumann boundary conditions, the existence of globally bounded solutions of \eqref{CKSN} was established if $n=2$ and $\chi<\chi_0$ for some $\chi_0>1$ \cite{La-M2AS-2016} or $n\geq 3$ and $\chi<\sqrt{\frac{2}{n}}$ \cite{F-JMAA-2015,W-M2AS-2011}. Moreover, when $\chi<\sqrt{\frac{2}{n}}$ and $n\geq 2$, the asymptotic stability of constant steady states was obtained in \cite{WY-NATM-2018}. More results on the radially symmetric case or weak solutions can be found in \cite{SW-NA-2011,FS-Nonlinearity-2016, B-AMSA-1998,FS-DCDSB-2016,FWY-M2AS-2015} and we refer to \cite{BBTW-M3AS-2015} for more details.
}

Turning to the full KS system \eqref{KS} where $\gamma(v)$ and $\phi(v)$ are nonconstant functions satisfying \eqref{KS-1}, to our knowledge, the known results are only limited to the special case $\phi(v)=-\gamma'(v)$ (i.e. $\alpha=0$ in \eqref{KS-1}) which simplifies the KS system \eqref{KS} into
\begin{equation}\label{KS-2}
\begin{cases}
u_t=\Delta(\gamma(v)u),&x\in\Omega,\ t>0,\\
 v_t=\Delta v+u-v,&x\in\Omega,\ t>0.
\end{cases}
\end{equation}
Here the parameter $\alpha=0$ in \eqref{KS-1} means that ``the distance between receptors is zero and the chemotaxis occurs because of an undirected effect on activity due to the presence of a chemical sensed by a single receptor" as stated in \cite[On page 228]{KS-1971-JTB2}. Recently to describe the stripe pattern formation observed in the experiment of \cite{Liu-Science},  a so-called density-suppressed motility model was proposed in \cite{Fu-PRL-2011} as follows
\begin{equation}\label{KSN}
\begin{cases}
u_t=\Delta(\gamma(v)u)+\sigma u(1-u),&x\in\Omega,\ t>0,\\
 v_t=\Delta v+u-v,&x\in\Omega,\ t>0.
\end{cases}
\end{equation}
with $\gamma'(v)<0$ and $\sigma\geq 0$ denotes the intrinsic cell growth rate. Clearly the density-suppressed motility model \eqref{KSN} with $\sigma=0$ coincides with the simplified KS model \eqref{KS-2}.

When the homogeneous Neumann boundary conditions are imposed, there are some results available to \eqref{KS-2} and \eqref{KSN}. First for the system \eqref{KS-2}, it was shown that globally bounded solutions exist in two dimensional spaces by assuming that the motility function $\gamma(v)\in C^3{([0,\infty)}\cap W^{1,\infty}(0,\infty))$ has both positive lower and upper bounds \cite{TW-M3AS-2017}. It turns out that the uniformly positive assumption on $\gamma(v)$ (i.e. $\gamma(v)$ has a positive lower bound) is not necessary to ensure the global boundedness of solutions. For example, if $\gamma(v)=\frac{\chi}{v^k}$ (i.e. $\gamma(v)$ decays algebraically), it has been proved that global bounded solutions exist in all dimensions provided $\chi>0$ is small enough \cite{YK-AAM-2017} or in two dimensional spaces for parabolic-elliptic simplification of the system \eqref{KS-2} (see \cite{AY-Nonlinearity-2019}). For the system \eqref{KSN} with $\gamma'(v)<0$,  it was shown that global bounded solutions exist in two dimensions for any $\sigma>0$ \cite{JKW-SIAP-2018} and in higher dimensions $(n\geq 3)$ for large $\sigma>0$ \cite{WW}. The results of \cite{JKW-SIAP-2018} essentially rely on the assumption $\sigma>0$. Therefore a natural question is whether the solution of \eqref{KSN} with $\sigma=0$ (i.e. KS system \eqref{KS-2} with $\gamma'(v)<0$) is globally bounded ? This question has been partially confirmed in \cite{YK-AAM-2017, AY-Nonlinearity-2019} for algebraically decay function $\gamma(v)$ with various conditions as mentioned above.  The purpose of this paper is to investigate the same question for exponentially decay motility function $\gamma(v)= e^{-\chi v}$ with $\chi>0$. That is we consider the following problem
\begin{equation}\label{1-1}
\begin{cases}
u_t=\Delta (e^{-\chi v}u), &x\in \Omega, ~~t>0,\\
 v_t=\Delta v+u- v,& x\in \Omega, ~~t>0,\\
 \frac{\partial u}{\partial\nu}=\frac{\partial v}{\partial \nu}=0, &x\in\partial\Omega,t>0,\\
u(x,0)=u_0(x),~v(x,0)=v_0(x), & x\in \Omega,
\end{cases}
\end{equation}
where $\Omega\subset\R^n$ is a bounded domain with smooth boundary and $\nu$ stands for the outward unit normal vector on $\partial\Omega$. Surprisingly, we find that uniform-in-time boundedness of solutions of \eqref{1-1} is no longer true and the solution may blow-up in two dimensions, which is quite different from the results of \cite{YK-AAM-2017, AY-Nonlinearity-2019} for algebraically decay function $\gamma(v)$. Our result indicates that the solution behavior of the system \eqref{KS-2} may essentially depend on the decay rate of the motility function $\gamma(v)$. The main results of this paper are the following.
{\color{black}\begin{theorem}\label{BS}
Let $\Omega\subset \R^2 $ be a  bounded domain with smooth boundary. Assume that $0\leq(u_0,v_0)\in [W^{1,\infty}(\Omega)]^2$. Then the following results hold true.

(i) If $\int_\Omega u_0dx<\frac{4\pi}{\chi}$,  then the system (\ref{1-1})  admits a unique classical solution $(u,v)\in [C^0(\bar{\Omega}\times[0,\infty))\cap C^{2,1}(\bar{\Omega}\times(0,\infty))]^2$ satisfying
\begin{equation*}
\|u(\cdot,t)\|_{L^\infty}+\|v(\cdot,t)\|_{W^{1,\infty}}\leq C,
\end{equation*}
where $C$ is a constant independent of $t$.

(ii) For any $M>\frac{4\pi}{\chi}$ and $M\not \in \{\frac{4\pi m}{\chi}: m\in\mathbb{N}^+\}$ where $\mathbb{N}^+$ denotes the set of positive integers, there exist initial data $(u_0,v_0)$ satisfying $\int_\Omega u_0dx=M$ such that the corresponding solution blows up in finite/infinite time.

%If $M>\frac{4\pi}{\chi}$ and $M\not \in \{\frac{4\pi m}{\chi}: m\in\mathbb{N}^+\}$ where $\mathbb{N}^+$ denotes the set of positive integers, then there exist initial data such that the solutions of  $\eqref{1-1}$ blow up in finite/infinite time.
\end{theorem}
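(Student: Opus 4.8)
My plan is to treat the two parts by the standard dichotomy: part (i) via an energy/entropy method adapted to the signal-dependent motility, and part (ii) via a contradiction argument using a (quasi-)Lyapunov functional together with a lower bound on its infimum over the mass-constrained set.

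For part (i), I would first establish local existence and a blow-up criterion by fixed-point/parabolic regularity arguments, so that it suffices to obtain a uniform-in-time bound on $\|u(\cdot,t)\|_{L^\infty}$. The key is to produce a good a priori estimate on $\int_\Omega u\ln u$. Since $\gamma(v)=e^{-\chi v}$, I would test the $u$-equation with $\ln(e^{-\chi v}u)=\ln u-\chi v$ (or with $\ln u$ and handle the coupling to $v$ carefully), which should yield, after integrating by parts and using the $v$-equation, a differential inequality for a functional of the form
\begin{equation*}
\mathcal{F}(u,v)=\int_\Omega u\ln u\,dx-\chi\int_\Omega uv\,dx+\frac{\chi}{2}\int_\Omega\bigl(|\nabla v|^2+v^2\bigr)dx
\end{equation*}
together with a dissipation term controlling $\int_\Omega e^{-\chi v}u\,|\nabla\ln(e^{-\chi v}u)|^2$ and $\int_\Omega v_t^2$. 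The role of the hypothesis $\int_\Omega u_0\,dx<4\pi/\chi$ is then exactly as in the minimal Keller--Segel model: by the Moser--Trudinger inequality on $\Omega\subset\R^2$, the functional $\mathcal{F}$ is bounded below along the flow (this is where the subcritical mass is used), which, combined with the dissipation, gives a uniform bound on $\int_\Omega u\ln u$ and on $\|v\|_{H^1}$. From there I would bootstrap: $u\ln u\in L^\infty_t L^1_x$ gives via the $v$-equation and smoothing estimates a bound on $\nabla v$ in suitable spaces, then a Moser iteration (or semigroup $L^p$--$L^q$ estimates applied to $w=e^{-\chi v}u$, noting $w_t=\Delta w - \chi \nabla\cdot(w\nabla v)\cdot(\text{correction})$, or directly to the $u$-equation written in divergence form) upgrades this to $\|u\|_{L^\infty}\le C$ uniformly in $t$. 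The main obstacle here is that, unlike \eqref{CKS}, the functional $\mathcal{F}$ need not be exactly monotone because the cross terms from $\gamma(v)=e^{-\chi v}$ (as opposed to linearizing) generate an extra remainder; I expect one must show this remainder is either sign-definite or absorbable by the dissipation, possibly after an additional clever test function or a Gagliardo--Nirenberg argument to close the estimate.

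For part (ii), the strategy is the classical one of Horstmann--Wang type: assume for contradiction that for some $M\in(4\pi/\chi,\infty)\setminus\{4\pi m/\chi\}$ and \emph{every} admissible initial datum with mass $M$ the solution is global and bounded. Along such a bounded global solution one shows $\mathcal{F}(u(\cdot,t),v(\cdot,t))$ is bounded below uniformly (using boundedness of $u,v$), while the energy identity shows $\mathcal{F}$ is nonincreasing (at least up to controllable terms), so $\mathcal{F}(u(\cdot,t),v(\cdot,t))\le \mathcal{F}(u_0,v_0)$ for all $t$. The contradiction comes from choosing initial data for which $\mathcal{F}(u_0,v_0)$ is arbitrarily close to $-\infty$: one constructs a family of sharply concentrated $u_0$ (approximate sums of $\approx M\chi/(4\pi)$ Dirac-like bumps, or a single bump if $M<8\pi/\chi$) with fixed mass $M$, solves the elliptic problem $-\Delta v_0+v_0=u_0$ for $v_0$, and estimates $\int u_0\ln u_0$ (large positive, scaling like $|\ln\varepsilon|$ times the total mass) against $-\chi\int u_0 v_0$ (large negative, scaling like $\tfrac{\chi}{4\pi}$ times the \emph{square} of each bump's mass times $|\ln\varepsilon|$, by the logarithmic singularity of the Neumann Green's function). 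The condition $M>4\pi/\chi$, distributed over finitely many bumps so that some bump carries mass $>4\pi/\chi$ (here the exclusion of the exceptional values $4\pi m/\chi$ guarantees one can split $M$ into pieces each exceeding $4\pi/\chi$ strictly), makes the negative term dominate, so $\mathcal{F}(u_0,v_0)\to-\infty$; this contradicts the uniform lower bound and forces blow-up in finite or infinite time. The principal difficulty in part (ii) is making the Lyapunov inequality rigorous for \eqref{1-1}: because of the nonlinear motility the functional $\mathcal{F}$ may only satisfy $\frac{d}{dt}\mathcal{F}\le (\text{dissipation})+(\text{remainder})$, and one must argue that along a \emph{bounded} solution the remainder is integrable in time (or of a fixed sign), so that $\mathcal{F}$ still cannot decrease to $-\infty$; alternatively one replaces $\mathcal{F}$ by a modified functional tailored to $e^{-\chi v}$ that is genuinely monotone. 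Carrying out the concentration construction so that $v_0\in W^{1,\infty}$ (rather than merely $H^1$) while keeping the energy estimate is a further technical point that I would handle by mollification.
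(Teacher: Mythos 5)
Your overall architecture (Lyapunov functional plus Trudinger--Moser for subcritical mass; energy contradiction for supercritical mass) matches the paper's, but both halves of your sketch have gaps at precisely the points where this system differs from the minimal Keller--Segel model. For part (i): first, your worry about a ``remainder'' in the energy identity is unfounded --- testing the first equation with $\ln u-\chi v$ gives the \emph{exact} identity $\frac{d}{dt}F(u,v)+\chi\int_\Omega v_t^2\,dx+\int_\Omega e^{-\chi v}u|\nabla(\ln u-\chi v)|^2\,dx=0$, with no extra term. The real obstruction, which your bootstrap skips, is the possible degeneracy $e^{-\chi v}\to 0$: once you test with $u^{p-1}$ the dissipation is $\int_\Omega e^{-\chi v}u^{p-2}|\nabla u|^2\,dx$, and without an a priori upper bound on $v$ (equivalently a positive lower bound on the motility) the standard Gagliardo--Nirenberg absorption from the $L\log L$ bound does not close; your alternative $w=e^{-\chi v}u$ satisfies $w_t=e^{-\chi v}\Delta w-\chi v_t w$, which is again degenerate. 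The paper bridges this with weighted estimates: a duality argument with $\mathcal{A}^{-1/2}(u-\bar u)$ (controlled via the $L\log L$ bound) yields $\int_t^{t+\tau}\int_\Omega e^{-\chi v}u^2\,dx\,ds\le C$, which in turn gives $\int_t^{t+\tau}\|v\|_{L^\infty}\,ds\le C$, and then a differential inequality for $\|u\|_{L^2}^2$ with coefficient $\|e^{-\chi v/2}u\|_{L^2}^2+\|v_t\|_{L^2}^2+1$ is integrated from a well-chosen time $t_0\in(t-\tau,t)$ to get $\|u\|_{L^2}\le C$; only then does $\|v\|_{L^\infty}\le C$ follow, the diffusion becomes uniformly non-degenerate, and Moser iteration applies. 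Some substitute for this chain is needed; it is not a routine bootstrap.

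For part (ii) there are two concrete problems. First, your contradiction does not close as stated: for a fixed initial datum, ``$\mathcal F$ is bounded below along the bounded solution'' produces a constant depending on that solution's sup-norm, which is perfectly compatible with $\mathcal F(u(t))\le \mathcal F(u_0,v_0)$ being very negative. The paper instead shows that a bounded global solution subconverges in $C^2$ to a \emph{steady state} $(U_\infty,V_\infty)$ with $F(U_\infty,V_\infty)\le F(u_0,v_0)$, and invokes the Horstmann--Wang lower bound $F(U,V)\ge -K$ valid for all steady states of mass $M$ precisely when $M\notin\{4\pi m/\chi\}$; that is where the exceptional set enters, not (as you suggest) in splitting $M$ into bumps. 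Second, your concentration construction fails on the range $M\in(\frac{4\pi}{\chi},\frac{8\pi}{\chi})$: a single \emph{interior} bubble gives $F\approx 2(\frac{8\pi}{\chi}-M)\ln\frac1\varepsilon\to+\infty$ there, and such $M$ cannot be split into several pieces each exceeding $\frac{4\pi}{\chi}$. The paper centers the bubble at a point $x_0\in\partial\Omega$, which halves the gradient cost and yields $F(U_\varepsilon,V_\varepsilon)\le 2(\frac{4\pi}{\chi}-M)\ln\frac1\varepsilon+O(1)\to-\infty$ for every $M>\frac{4\pi}{\chi}$; the boundary placement is essential and must be made explicit.
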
}

We remark that the blowup result in Theorem \ref{BS}$(ii)$ does not assert the finiteness or infiniteness of blowup time, which leaves out an interesting question for the future study. {\color{black} Moreover, as we know the nonlinear diffusion may play an important role in blow-up dynamics such as blow-up rate (see \cite{KPN-MA-2017,BP-CPAA-2012} and references therein). Hence it would be of interest to study qualitative properties of blow-up solutions to the system \eqref{1-1} in the future.}

{\color{black} The new contribution of this paper lies in the finding of the critical mass phenomenon for the system \eqref{KS-2} with exponentially  decay motility function $\gamma(v)$. This new finding along with the existing results in \cite{YK-AAM-2017, AY-Nonlinearity-2019} for \eqref{KS-2} with algebraically decay function $\gamma(v)$ shows that the dynamics of \eqref{KS-2} is very rich and complex where the decay rate of the motility function $\gamma(v)$ will play a key role. This provides us a heuristic direction to {\color{black}further} explore the dynamics of the full Keller-Segel system \eqref{KS} whose dynamics has been only partially understood so far for the special case $\alpha=0$ in \eqref{KS-1}, namely for \eqref{KS-2}. Technically to overcome the possible degeneracy, we develop the weighted energy estimates by treating the degenerate term as a weight function to achieve the results in Theorem \ref{BS}. This technique may become a common (if not necessary) tool to study chemotaxis systems with the signal-dependent degenerate diffusion}.

One can check that the system \eqref{1-1} has the same Lyapunov functional \eqref{3-2} as for the minimal KS model \eqref{CKS}, which can be used to construct some initial data with large negative energy such that the solution of \eqref{1-1} blows up  for supercritical mass (i.e.,$\int_\Omega u_0dx>\frac{4\pi}{\chi}$).  Moreover, under the subcritical mass (i.e. $\int_\Omega u_0dx<\frac{4\pi}{\chi}$), using the same Lyapunov functional and  Trudinger-Moser inequality,  we can find a constant $c_1>0$ such that
\begin{equation}\label{KS-4}
\|u\ln u\|_{L^1}+\|\nabla v\|_{L^2}+\int_0^t\| v_t\|_{L^2}^2ds\leq c_1
\end{equation}
which has been a key to prove the boundedness of solutions of the minimal KS system \eqref{CKS}. However, there are some significant differences between systems \eqref{CKS} and \eqref{1-1}. For the minimal KS model \eqref{CKS}, the estimate \eqref{KS-4} is enough to establish the existence of global classical solutions (see \cite{Nagai-Funk}). However for the system \eqref{1-1}, the motility coefficient $e^{-\chi v}$ may touch down to zero (degenerate) as $v\to\infty$, and hence the method for the constant diffusion as in \cite{Nagai-Funk} no longer works and new ideas are demanded. In this paper, we shall develop the weighted energy estimates by taking $e^{-\chi v}$ as the weight function based on the Lyapunov functional to establish our results.

\section{Local existence and basic inequalities}
Using Amann's theorem \cite{A-DIE-1990, A-Book-1993} (cf. also \cite[Lemma 2.6]{Wang-Hillen}) or the well-established fixed point argument together with the parabolic regularity theory \cite{TW-2015-SIMA,JKW-SIAP-2018}, we can show the existence and uniqueness of local solutions of $\eqref{1-1}$. We omit the details of the proof for brevity.
\begin{lemma}\label{LS}Let $\Omega\subset \R^2 $ be a  bounded domain with smooth boundary.
Assume that $0\leq(u_0,v_0)\in [W^{1,\infty}(\Omega)]^2$. Then there exists $T_{max}\in(0,\infty]$ such that the problem \eqref{1-1} has a unique classical solution $(u,v)\in[C(\bar{\Omega}\times[0,T_{max}))\cap C^{2,1}(\bar{\Omega}\times(0,T_{max}))]^2$. Moreover $u,v>0$ in $\Omega\times(0,T_{max})$ and
\begin{equation*}\label{BC}
if ~T_{max}<\infty, ~~then~~\norm{u(\cdot,t)}_{L^\infty}+\|v(\cdot,t)\|_{W^{1,\infty}}\to\infty ~~ as~~~t\nearrow T_{max}.
\end{equation*}
\end{lemma}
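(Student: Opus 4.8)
The plan is to treat \eqref{1-1} as a triangular quasilinear parabolic system for the vector $w=(u,v)^{T}$ and to invoke Amann's theory. First I would rewrite the first equation in divergence form,
\begin{equation*}
u_t=\nabla\cdot\bigl(e^{-\chi v}\nabla u-\chi e^{-\chi v}u\,\nabla v\bigr),
\end{equation*}
so that the whole system reads $w_t=\nabla\cdot\bigl(\mathcal{A}(w)\nabla w\bigr)+\mathcal{F}(w)$ with
\begin{equation*}
\mathcal{A}(w)=\begin{pmatrix} e^{-\chi v} & -\chi e^{-\chi v}u \\ 0 & 1\end{pmatrix},\qquad \mathcal{F}(w)=\begin{pmatrix}0\\ u-v\end{pmatrix},
\end{equation*}
subject to homogeneous Neumann data. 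The matrix $\mathcal{A}(w)$ is upper triangular with diagonal entries $e^{-\chi v}>0$ and $1$, hence its eigenvalues are $e^{-\chi v}$ and $1$, both strictly positive for every finite $v$; thus $\mathcal{A}$ is normally elliptic. Since the entries of $\mathcal{A}$ and $\mathcal{F}$ are smooth functions of $(u,v)$ and $u_0,v_0\in W^{1,\infty}(\Omega)\hookrightarrow W^{1,p}(\Omega)$ for every $p>n=2$, Amann's existence theorem \cite{A-DIE-1990,A-Book-1993} yields a unique maximal classical solution $(u,v)\in[C(\bar\Omega\times[0,T_{max}))\cap C^{2,1}(\bar\Omega\times(0,T_{max}))]^2$ on some interval $[0,T_{max})$. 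Equivalently one may freeze $v$, solve the resulting linear non-degenerate equation for $u$, feed $u$ into the heat-type equation for $v$, and close a contraction in a short-time parabolic H\"older space using the Schauder estimates of \cite{TW-2015-SIMA,JKW-SIAP-2018}.

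Next I would establish positivity. The first equation is linear in $u$ once $v$ is regarded as given; in divergence form its coefficients $e^{-\chi v}$ and $\chi e^{-\chi v}\nabla v$ are bounded on $\bar\Omega\times[0,T]$ for each $T<T_{max}$, and $u\equiv0$ solves it. Hence the comparison principle for divergence-form linear parabolic equations, together with $u_0\ge0$, forces $u\ge0$; writing the equation in the non-divergence form
\begin{equation*}
u_t=e^{-\chi v}\Delta u-2\chi e^{-\chi v}\nabla v\cdot\nabla u+\bigl(\chi^{2}e^{-\chi v}\abs{\nabla v}^{2}-\chi e^{-\chi v}\Delta v\bigr)u,
\end{equation*}
valid for $t>0$ where $v$ is smooth, the strong maximum principle upgrades this to $u>0$ in $\Omega\times(0,T_{max})$. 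With $u\ge0$ in hand, the constant $0$ is a subsolution of $v_t=\Delta v+u-v$, so comparison with $v_0\ge0$ gives $v\ge0$ and, again by the strong maximum principle, $v>0$ for $t>0$.

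Finally I would derive the extensibility criterion. Amann's theory furnishes the abstract alternative that either $T_{max}=\infty$ or $\limsup_{t\nearrow T_{max}}\norm{w(\cdot,t)}_{W^{1,p}}=\infty$ for the chosen $p>2$. To recast this in the stated norms it suffices to show that an a priori bound $\sup_{t<T_{max}}\bigl(\norm{u(\cdot,t)}_{L^\infty}+\norm{v(\cdot,t)}_{W^{1,\infty}}\bigr)<\infty$ forces a bound on $\norm{w(\cdot,t)}_{W^{1,p}}$, and hence contradicts maximality. Assuming such a bound, $e^{-\chi v}$ is bounded below by a positive constant so that the $u$-equation is uniformly parabolic; standard parabolic $L^p$ and Schauder estimates, applied first to the $v$-equation whose right-hand side $u-v$ is then bounded in $L^\infty$ and subsequently to the $u$-equation, bootstrap the solution into $W^{1,p}$ uniformly in $t$. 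This contradiction shows that $T_{max}<\infty$ implies $\norm{u(\cdot,t)}_{L^\infty}+\norm{v(\cdot,t)}_{W^{1,\infty}}\to\infty$ as $t\nearrow T_{max}$.

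The main obstacle is not local solvability: on any finite time interval $v$ stays finite, so $e^{-\chi v}$ is bounded away from zero and the system is uniformly parabolic, the degeneracy $e^{-\chi v}\to0$ as $v\to\infty$ being a purely global phenomenon that drives the blow-up analyzed later. The delicate point is rather the precise form of the blow-up criterion: one must verify that control of the comparatively weak quantities $\norm{u}_{L^\infty}$ and $\norm{v}_{W^{1,\infty}}$ is enough to run the regularity bootstrap up to the phase-space norm $W^{1,p}$ required by Amann, which is precisely where the non-degeneracy of $\mathcal{A}$ on regions of bounded $v$ is used.
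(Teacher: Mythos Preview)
Your proposal is correct and follows exactly the approach the paper indicates: the paper itself does not give a detailed proof but simply states that one uses Amann's theorem \cite{A-DIE-1990,A-Book-1993} (or alternatively a fixed-point argument with parabolic regularity as in \cite{TW-2015-SIMA,JKW-SIAP-2018}) and then omits the details. You have supplied precisely those omitted details---the divergence-form rewriting, the normal ellipticity via the upper-triangular structure of $\mathcal{A}(w)$, the positivity via comparison and strong maximum principles, and the reduction of the blow-up criterion to the stated norms---all of which are standard and correctly executed.
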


\begin{lemma} If $(u,v)$ is a  solution  of $\eqref{1-1}$ in $\Omega\times(0,T)$ for some $T>0$, then
\begin{equation}\label{L1-u}
\|u(\cdot,t)\|_{L^1}=\|u_0\|_{L^1}:\equiv M_0, \ \mathrm{for\ all}\ t\in(0,T)
\end{equation}
and
\begin{equation}\label{L1-v}
\|v(\cdot,t)\|_{L^1}\leq \|u_0\|_{L^1}+\|v_0\|_{L^1},  \ \mathrm{for\ all}\ t\in(0,T).
\end{equation}
\end{lemma}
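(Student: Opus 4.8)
The plan is to obtain both identities directly by integrating the two equations of \eqref{1-1} over $\Omega$ and invoking the homogeneous Neumann boundary conditions, so that nothing beyond the local classical regularity furnished by Lemma \ref{LS} is required.

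First, for \eqref{L1-u}, I would rewrite the first equation of \eqref{1-1} in divergence form as $u_t=\nabla\cdot\big(\nabla(e^{-\chi v}u)\big)$ and integrate over $\Omega$. By the divergence theorem the right-hand side equals $\int_{\partial\Omega}\nabla(e^{-\chi v}u)\cdot\nu\,dS$, and since
$\nabla(e^{-\chi v}u)\cdot\nu=e^{-\chi v}\big(\partial_\nu u-\chi u\,\partial_\nu v\big)=0$
on $\partial\Omega$ by $\partial_\nu u=\partial_\nu v=0$, we get $\frac{d}{dt}\int_\Omega u\,dx=0$. Together with $u>0$ from Lemma \ref{LS}, this yields $\|u(\cdot,t)\|_{L^1}=\int_\Omega u\,dx=\int_\Omega u_0\,dx=\|u_0\|_{L^1}=:M_0$ for all $t\in(0,T)$.

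Next, for \eqref{L1-v}, I would integrate the second equation of \eqref{1-1} over $\Omega$; the term $\int_\Omega\Delta v\,dx$ vanishes again because $\partial_\nu v=0$, so writing $m(t):=\int_\Omega v(\cdot,t)\,dx\ge 0$ we obtain the scalar linear ODE $m'(t)=M_0-m(t)$, hence $m(t)=M_0+\big(m(0)-M_0\big)e^{-t}$. Since $m(0)=\|v_0\|_{L^1}$, this forces $m(t)\le\max\{M_0,\|v_0\|_{L^1}\}\le M_0+\|v_0\|_{L^1}$, which, combined with $v>0$, gives \eqref{L1-v}.

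There is essentially no serious obstacle here: the only point needing a word of justification is differentiation under the integral sign and the boundary integrations, both of which are immediate from the $C^{2,1}(\bar\Omega\times(0,T))$ regularity of $(u,v)$ in Lemma \ref{LS}. I would also remark that the argument uses only the no-flux structure and the mass-balance form of the equations, so it holds verbatim in any space dimension and does not use $n=2$.
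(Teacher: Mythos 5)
Your proof is correct and follows essentially the same route as the paper: integrate the first equation and use the no-flux boundary condition for mass conservation, then integrate the second equation to obtain the linear ODE $\frac{d}{dt}\int_\Omega v\,dx+\int_\Omega v\,dx=M_0$ and bound its solution. The only difference is that you solve the ODE explicitly where the paper simply reads off the bound, which is an immaterial variation.
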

\begin{proof}Integrating the first equation of  \eqref{1-1} and using the Neumann boundary conditions, we obtain \eqref{L1-u} directly. On the other hand, integrating the second equation of \eqref{1-1} with respect to $x$ over $\Omega$, one has
\begin{equation*}
\frac{d}{dt}\int_\Omega vdx+\int_\Omega vdx=\int_\Omega udx=\int_\Omega u_0dx,
\end{equation*}
which immediately gives \eqref{L1-v}.
\end{proof}
{\color{black}{
\begin{lemma}\label{kb*}
Let $\Omega\subset\R^2$ be a bounded domain with smooth boundary. Assume $\mathcal{A}$ is a self-adjoint realization of $-\Delta$  defined on $D(\mathcal{A}):=\{\psi\in W^{2,2}(\Omega)\cap L^{2}(\Omega)|\int_{\Omega}\psi=0\text{~~and~~}\frac{\partial\psi}{\partial\nu}=0~~\text{on}~~\partial\Omega\}$.
Then for any $L>0$ and a nonnegative function $f$ satisfying
\begin{equation}\label{lne}
\int_\Omega f\ln f dx\leq L,
\end{equation}
it holds that
\begin{equation}\label{kb}
\int_\Omega |\mathcal{A}^{-\frac{1}{2}}(f-\bar{f})|^2dx\leq C(L),
\end{equation}
where $\bar{f}=\frac{1}{|\Omega|}\int_\Omega f dx$.
\end{lemma}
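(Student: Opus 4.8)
The plan is to realize the negative-order norm $\|\mathcal{A}^{-1/2}(f-\bar f)\|_{L^2}$ as a Dirichlet energy and then control that energy through the Legendre duality between $s\ln s$ and $e^{t}$ together with the Trudinger--Moser inequality. Set $g:=f-\bar f$, which has mean zero, and let $w\in H^{1}(\Omega)$ with $\int_\Omega w\,dx=0$ be the weak solution of the Neumann problem $-\Delta w=g$ in $\Omega$, $\partial_\nu w=0$ on $\partial\Omega$; that is, $w=\mathcal{A}^{-1}g$. Since $\mathcal{A}$ is positive and self-adjoint, $\|\mathcal{A}^{-1/2}g\|_{L^2}^2=\langle\mathcal{A}^{-1}g,g\rangle_{L^2}=\langle w,-\Delta w\rangle_{L^2}$, and an integration by parts (the boundary term vanishes by the Neumann condition) gives
\[
\int_\Omega|\mathcal{A}^{-1/2}(f-\bar f)|^2\,dx=\int_\Omega|\nabla w|^2\,dx=:D .
\]
Moreover, because $\int_\Omega w\,dx=0$ we have $\int_\Omega w g\,dx=\int_\Omega w f\,dx$, so $D=\int_\Omega wf\,dx$. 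It therefore suffices to bound $D$ by a constant depending only on $L$ and $\Omega$.

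The estimate for $D$ is carried out after normalizing, which is what prevents the argument from becoming circular. If $D=0$ the claim is trivial, so assume $D>0$ and set $\hat w:=w/\sqrt{D}$, so that $\int_\Omega\hat w\,dx=0$ and $\|\nabla\hat w\|_{L^2}=1$. Applying the Young inequality $st\le s\ln s-s+e^{t}$ (valid for all $s\ge0$, $t\in\R$) pointwise with $s=f$, $t=\hat w$, integrating, and using $f\ge0$ together with hypothesis \eqref{lne}, we obtain
\[
\int_\Omega\hat w f\,dx\le\int_\Omega(f\ln f-f)\,dx+\int_\Omega e^{\hat w}\,dx\le L+\int_\Omega e^{\hat w}\,dx .
\]
By the Trudinger--Moser inequality on the bounded two-dimensional domain $\Omega$, in the form $\int_\Omega e^{\hat w}\,dx\le C\exp\!\big(\tfrac1{8\pi}\|\nabla\hat w\|_{L^2}^{2}\big)$ for mean-zero $\hat w\in H^1(\Omega)$, the last integral is bounded by a constant $C_\Omega$ depending only on $\Omega$ since $\|\nabla\hat w\|_{L^2}=1$. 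Hence $\int_\Omega\hat w f\,dx\le L+C_\Omega$, and combining this with $D=\int_\Omega wf\,dx=\sqrt{D}\int_\Omega\hat w f\,dx$ yields $\sqrt{D}\le L+C_\Omega$, i.e. $D\le(L+C_\Omega)^2=:C(L)$, which is \eqref{kb}.

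The one genuinely delicate point is the normalization: estimating $\int_\Omega wf\,dx$ directly by Young's inequality and Trudinger--Moser, without first scaling out $\sqrt D$, would only give $D\le L+Ce^{cD}$, which is vacuous; pulling the factor $\sqrt D$ outside converts the exponential dependence into a harmless multiplicative constant. A minor technical point is the finiteness of the integrals $\int_\Omega wf\,dx$ and $\int_\Omega\hat w f\,dx$ and the solvability of the Neumann problem when $g\notin L^2(\Omega)$; both follow from the Orlicz duality $L\log L$--$\exp L$ in two dimensions (equivalently, from $H^1(\Omega)\hookrightarrow\exp L(\Omega)$), which in particular gives $f-\bar f\in (H^1(\Omega))'$. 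In the applications of this lemma $f$ will be a classical solution, so these points are automatic anyway.
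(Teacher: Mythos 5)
Your proof is correct, and it takes a genuinely different route from the paper at the key step. Both arguments begin the same way, identifying $\int_\Omega |\mathcal{A}^{-1/2}(f-\bar f)|^2dx$ with the Dirichlet energy $\int_\Omega|\nabla w|^2dx$ of the solution of the Neumann problem, and both ultimately exploit the Legendre duality between $s\ln s$ and $e^{t}$. The divergence is in how the exponential integral is controlled. The paper first extracts an $L^1$ bound on $f-\bar f$ from the entropy hypothesis, represents $\phi=\mathcal{A}^{-1}(f-\bar f)$ through the Neumann Green's function, and invokes a Brezis--Merle-type lemma (citing \cite[Lemma A.3]{TW-JDE-2014}) to obtain $\int_\Omega e^{\kappa|\phi|}dx\leq c_1$ with $\kappa$ and $c_1$ determined by that $L^1$ bound; the duality inequality $XY\leq \kappa^{-1}X\ln X+(\kappa e)^{-1}e^{\kappa Y}$ then closes the estimate with no normalization needed, since the exponential bound on $\phi$ is already uniform. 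You instead bypass the Green's function and the auxiliary lemma entirely: the normalization $\hat w=w/\sqrt{D}$ reduces everything to the Trudinger--Moser inequality applied to a mean-zero function of unit Dirichlet energy, and, as you correctly emphasize, pulling the factor $\sqrt{D}$ out of the pairing is exactly what turns the otherwise vacuous bound $D\leq L+Ce^{cD}$ into the clean quadratic bound $D\leq (L+C_\Omega)^2$. Your version is more self-contained (Trudinger--Moser is already used elsewhere in the paper, whereas the Green's-function lemma is an extra import) and makes the dependence $C(L)=(L+C_\Omega)^2$ explicit; the paper's version yields the slightly stronger intermediate fact that $\phi$ itself is exponentially integrable at a rate governed only by $\|f\|_{L^1}$, which can be reused elsewhere. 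Your closing remarks on the finiteness of $\int_\Omega wf\,dx$ and on solvability for $f$ merely in $L\log L$ are appropriate and correctly dismissed for the classical solutions to which the lemma is applied.
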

\begin{proof}
Using \eqref{lne} and noting the fact $z\ln z\geq -\frac{1}{e}$ for all $z>0$, we have
\begin{equation*}
\begin{split}
\|f\|_{L^1}
&=\int_{f\geq e}fdx +\int_{f<e} fdx\\
&\leq \int_{f\geq e} f\ln f dx+\int_{f< e} fdx=\int_\Omega f\ln f-\int_{f<e}f\ln fdx +\int_{f<e}fdx\\
&\leq L+\frac{|\Omega|}{e}+e|\Omega|,
\end{split}
\end{equation*}
and hence
\begin{equation}\label{kb-2}
\|f-\bar{f}\|_{L^1}\leq 2\|f\|_{L^1}\leq 2L+\frac{2|\Omega|}{e}+2e|\Omega|.
\end{equation}
Next, we consider the following system
\begin{equation}\label{kb-1}
\begin{cases}
-\Delta \phi=f-\bar{f},&x\in\Omega,\\
\frac{\partial \phi}{\partial \nu}=0,&x\in\partial\Omega.
\end{cases}
\end{equation}
Let $G$ denote the Green's function of $-\Delta$ in $\Omega$ with the homogeneous Neumann boundary condition. From \eqref{kb-1}, one has
\begin{equation}\label{kb-2*}
\phi(x)=\int_\Omega G(x-y)(f(y)-\bar{f})dy.
\end{equation}
Then using the similar argument as in \cite[Lemma A.3]{TW-JDE-2014} along with \eqref{kb-2}, from \eqref{kb-2*} one can find a constant $\kappa>0$ such that
\begin{equation}\label{kb-3}
\int_\Omega e^{\kappa |\phi|}dx\leq c_1.
\end{equation}
Recall a result (see \cite[Lemma A.2]{TW-JDE-2014}): for $\kappa>0$, it holds
\begin{equation*}
XY\leq \frac{1}{\kappa}X\ln X+\frac{1}{\kappa e}e^{\kappa Y}\ \mathrm{for\ all}\ \ X>0 \ \mathrm{and}\ \  Y>0.
\end{equation*}
Then multiplying the first equation of $\eqref{kb-1}$ by $\phi$, and integrating it by parts, we end up with
\begin{equation}\label{kb-4}
\begin{split}
\int_\Omega |\nabla \phi|^2dx
=\int_\Omega f\phi-\bar{f}\int_\Omega \phi dx
&\leq \int_\Omega f|\phi|+\int_\Omega \bar{f} |\phi| dx\\
%&\leq \frac{1}{\kappa}\int_\Omega f\ln fdx+\frac{2}{\kappa e}\int_\Omega e^{\kappa |\phi|}dx+\frac{1}{\kappa}\int_\Omega \bar{f}\ln \bar{f}\\
&\leq\frac{1}{\kappa}\int_\Omega f\ln fdx+\frac{2}{\kappa e}\int_\Omega e^{\kappa |\phi|}dx+\frac{|\Omega|}{\kappa} \bar{f}\ln \bar{f}.
\end{split}
\end{equation}
Substituting \eqref{lne} and \eqref{kb-3} into \eqref{kb-4}, and using the boundedness of $\bar{f}\ln \bar{f}$, one has
\begin{equation}\label{kb-5}
\int_\Omega |\nabla \phi|^2dx\leq c_2
\end{equation}
where $c_2$ depends on $L$.
The definition of $\mathcal{A}$ defines the self-adjoint fractional powers $\mathcal{A}^{-\delta}$ for any $\delta>0$. Then from \eqref{kb-1} we have $\phi=\mathcal{A}^{-1}(f-\bar{f})$ and hence
\begin{equation*}
\int_\Omega |\mathcal{A}^{-\frac{1}{2}}(f-\bar{f})|^2dx=\int_\Omega \mathcal{A}^{-1}(f-\bar{f})(f-\bar{f})dx=\int_\Omega \phi (-\Delta \phi)dx=\int_\Omega |\nabla \phi|^2dx\leq c_2,
\end{equation*}
which gives \eqref{kb}.
\end{proof}
}}

\begin{lemma} \label{2L1}
Let $(u,v)$ be a solution of the system (\ref{1-1}). Then there exists a constant $C>0$ independent of $t$ such that
\begin{equation}\label{2Lnm}
\|\Delta v\|_{L^2}\leq C(\|u\|_{L^2}+\|v_t\|_{L^2}) .
\end{equation}
\end{lemma}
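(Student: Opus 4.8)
\emph{Proof proposal.} The plan is to treat the second equation of \eqref{1-1} as an elliptic problem for $v$ at each fixed time $t$, with $u-v_t$ playing the role of a prescribed right-hand side, and then exploit the testing identity obtained by multiplying by $-\Delta v$. First I would rewrite the second equation of \eqref{1-1} as
\begin{equation*}
-\Delta v+v=u-v_t \quad \text{in } \Omega,\qquad \frac{\partial v}{\partial\nu}=0\quad\text{on }\partial\Omega .
\end{equation*}
This is legitimate because, by Lemma \ref{LS}, the solution is classical, so $\Delta v$ is continuous on $\bar\Omega$ and in particular belongs to $L^2(\Omega)$, and all the integrals appearing below are finite.

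Next I would multiply this identity by $-\Delta v$ and integrate over $\Omega$. Using the homogeneous Neumann boundary condition, integration by parts yields $\int_\Omega v(-\Delta v)\,dx=\int_\Omega |\nabla v|^2\,dx\ge 0$, so that
\begin{equation*}
\int_\Omega |\Delta v|^2\,dx+\int_\Omega |\nabla v|^2\,dx=\int_\Omega (u-v_t)(-\Delta v)\,dx .
\end{equation*}
Applying Young's inequality to the right-hand side so as to generate a term $\tfrac12\int_\Omega |\Delta v|^2\,dx$, absorbing it into the left-hand side, and discarding the nonnegative term $\int_\Omega |\nabla v|^2\,dx$, I would arrive at $\int_\Omega |\Delta v|^2\,dx\le \int_\Omega (u-v_t)^2\,dx\le 2\|u\|_{L^2}^2+2\|v_t\|_{L^2}^2$, whence \eqref{2Lnm} follows with $C=\sqrt2$.

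The argument is essentially routine, and there is no genuine obstacle; the only point worth stressing is that the constant $C$ depends neither on the solution nor on $t$, since it comes solely from Young's inequality (equivalently, from the fact that the $W^{2,2}$-regularity estimate for $-\Delta+1$ with Neumann data depends only on the fixed smooth bounded domain $\Omega$). Alternatively, one could simply invoke the standard elliptic $W^{2,2}$ Neumann estimate $\|v\|_{W^{2,2}}\le C\|u-v_t\|_{L^2}$ directly, which gives the same conclusion because $\|\Delta v\|_{L^2}\le\|v\|_{W^{2,2}}$.
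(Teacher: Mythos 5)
Your argument is correct, and it reaches \eqref{2Lnm} by a more elementary route than the paper. Both proofs begin by recasting the second equation of \eqref{1-1} as the Neumann problem $-\Delta v+v=u-v_t$, but the paper then invokes the Agmon--Douglis--Nirenberg elliptic estimates as a black box to get $\|v\|_{W^{2,2}}\le c_1\|u-v_t\|_{L^2}$, from which $\|\Delta v\|_{L^2}$ is controlled a fortiori. You instead test the equation with $-\Delta v$, use the boundary condition to see that $\int_\Omega v(-\Delta v)\,dx=\int_\Omega|\nabla v|^2\,dx\ge 0$, and absorb; this is self-contained, needs no regularity theory beyond the classical solvability already provided by Lemma \ref{LS}, and yields an explicit constant (indeed, Cauchy--Schwarz on the right-hand side gives $\|\Delta v\|_{L^2}\le\|u-v_t\|_{L^2}$, so even $C=1$ works). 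What the energy argument buys is simplicity and a domain-independent constant; what it gives up is the full $W^{2,2}$ control of $v$ that ADN provides --- but since the lemma only asserts a bound on $\|\Delta v\|_{L^2}$, and that is all that is used later (in \eqref{2L2-6}), nothing is lost. Your closing remark that one could instead quote the standard $W^{2,2}$ Neumann estimate is precisely the paper's proof.
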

\begin{proof} Noting that $v$ satisfies the following system
\begin{equation}\label{EE}
\begin{cases}
-\Delta v+v=u-v_t,&x\in\Omega,\ \ t>0,\\
\frac{\partial v}{\partial\nu}=0,&x\in\Omega,\ \ t>0.\\
\end{cases}
\end{equation}
Then applying the Agmon-Douglis-Nirenberg $L^p$ estimates(see \cite{ADN-1959,ADN-1964}) to the system \eqref{EE}, we can find a constant $c_1>0$ such that
\begin{equation*}
\|v\|_{W^{2,2}}\leq c_1\|(u-v_t)\|_{L^2}\leq 2c_1(\|u\|_{L^2}+\|v_t\|_{L^2}),
\end{equation*}
which gives \eqref{2Lnm}.
\end{proof}

%\begin{lemma}[Trudinger-Moser inequality \cite{Nagai-Funk}]
%Let $\Omega$ be a bounded domain in $\R^2$ with smooth boundary. Then  for any $\varepsilon>0$,   there exists a constant $C_\varepsilon$ depending on $\varepsilon$ and $\Omega$ such that
%\begin{equation}\label{T-M}
%\int_\Omega \exp{|u|}dx\leq C_\varepsilon \exp\left\{\left(\frac{1}{8\pi}+\varepsilon\right)\|\nabla u\|_{L^2}^2+\frac{1}{|\Omega|}\|u\|_{L^1}\right\}.
%\end{equation}
%\end{lemma}
%\begin{lemma}[\cite{Nagai-Funk}] Let $\Omega$ be a bounded domain in $\R^2$ with smooth boundary. Then for any $\varepsilon>0$, there exists a positive constant $C_\varepsilon$ such that
%\begin{equation}\label{L3 estimate}
%\norm{w}_{L^3}\leq \varepsilon \norm{\nabla w}_{L^2}^\frac{2}{3}\norm{w\ln w}_{L^1}^\frac{1}{3}+C_\varepsilon(\|w\ln w\|_{L^1}+\|w\|_{L^1}^\frac{1}{3}).
%\end{equation}
%\end{lemma}

\section{Proof of Theorem \ref{BS}}

In this section, we shall prove Theorem \ref{BS}, which includes the global existence of classical solutions for subcritical mass and blowup of solutions for supercritical mass.
\begin{lemma}\label{L-Y}
Let $F(u,v)$ be defined in $\eqref{3-2}$. Then the solutions of $\eqref{1-1}$ satisfy
\begin{equation}\label{3-3}
\frac{d}{dt}F(u,v)+{\color{black}E}(u,v)=0,
\end{equation}
where
\begin{equation*}\label{3-4}
E(u,v)=\chi \int_\Omega v_t^2 dx+\int_{\Omega}e^{-\chi v}u|\nabla(\ln u-\chi v)|^2dx.
\end{equation*}
\end{lemma}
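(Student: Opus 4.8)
The plan is to differentiate $F(u,v)$ term by term along the solution and check that everything collapses to $-E(u,v)$ after integrating by parts. The convenient first move is to rewrite the cell equation in divergence form,
\[
u_t=\Delta(e^{-\chi v}u)=\nabla\cdot\big(e^{-\chi v}u\,\nabla(\ln u-\chi v)\big),
\]
which is legitimate since $u>0$ on $\Omega\times(0,T_{max})$ by Lemma \ref{LS} and since $\nabla(e^{-\chi v}u)=e^{-\chi v}u\,\nabla(\ln u-\chi v)$. The homogeneous Neumann conditions $\partial_\nu u=\partial_\nu v=0$ give $\partial_\nu(e^{-\chi v}u)=0$ on $\partial\Omega$, so all boundary integrals produced in the computation vanish; likewise $\partial_\nu v_t=0$.

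For the entropy term I would compute $\frac{d}{dt}\int_\Omega u\ln u\,dx=\int_\Omega(\ln u+1)u_t\,dx$, drop the constant $1$ using $\int_\Omega u_t\,dx=0$ (mass conservation \eqref{L1-u}), and integrate by parts once via the divergence form to obtain $-\int_\Omega \nabla(\ln u)\cdot e^{-\chi v}u\,\nabla(\ln u-\chi v)\,dx$. Writing $\nabla\ln u=\nabla(\ln u-\chi v)+\chi\nabla v$ splits this into the desired weighted dissipation $-\int_\Omega e^{-\chi v}u|\nabla(\ln u-\chi v)|^2\,dx$ plus a cross term $-\chi\int_\Omega\nabla v\cdot\nabla(e^{-\chi v}u)\,dx$; a second integration by parts converts the latter into $\chi\int_\Omega v\,\Delta(e^{-\chi v}u)\,dx=\chi\int_\Omega v\,u_t\,dx$. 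For the signal part, differentiating $\tfrac{\chi}{2}\int_\Omega(v^2+|\nabla v|^2)\,dx$, integrating by parts in the gradient term, and inserting $\Delta v=v_t-u+v$ from the second equation of \eqref{1-1} yields $-\chi\int_\Omega v_t^2\,dx+\chi\int_\Omega u\,v_t\,dx$. Finally $-\chi\frac{d}{dt}\int_\Omega uv\,dx=-\chi\int_\Omega u_t v\,dx-\chi\int_\Omega u v_t\,dx$.

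Adding the three contributions, the pairs $\chi\int_\Omega v u_t$ / $-\chi\int_\Omega u_t v$ and $\chi\int_\Omega u v_t$ / $-\chi\int_\Omega u v_t$ cancel, leaving exactly $\frac{d}{dt}F(u,v)=-\int_\Omega e^{-\chi v}u|\nabla(\ln u-\chi v)|^2\,dx-\chi\int_\Omega v_t^2\,dx=-E(u,v)$, which is \eqref{3-3}. The computation itself is routine; the only points requiring attention are the regularity needed to differentiate under the integral sign and to carry out the integrations by parts — supplied by the $C^{2,1}$ regularity and strict positivity of $(u,v)$ on $(0,T_{max})$ from Lemma \ref{LS} — and the verification that every boundary term vanishes, which follows from the Neumann conditions as noted above. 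I expect the mildest technical nuisance to be the limited regularity at $t=0$; this is handled in the standard way by establishing the identity on $(\tau,T_{max})$ for arbitrary $\tau>0$ and passing to the limit $\tau\to 0$.
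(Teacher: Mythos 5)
Your proof is correct and is essentially the same computation as the paper's: the paper tests the first equation against $\ln u-\chi v$ and then recognizes the left-hand side as $\tfrac{d}{dt}\int_\Omega u\ln u\,dx-\chi\tfrac{d}{dt}\int_\Omega uv\,dx+\chi\int_\Omega uv_t\,dx$, substituting $u=v_t-\Delta v+v$ to produce the $\chi\int_\Omega v_t^2\,dx$ term — the same integrations by parts you perform, merely organized in the opposite order. The only cosmetic difference is your extra splitting $\nabla\ln u=\nabla(\ln u-\chi v)+\chi\nabla v$ followed by a second integration by parts, which the paper avoids by testing with the combination $\ln u-\chi v$ from the start.
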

\begin{proof}
We multiply the first equation of $\eqref{1-1}$ by $(\ln u-\chi v)$ and integrate the result with respect to $x$ over $\Omega$ to have
\begin{equation}\label{3-5}
\begin{split}
\int_\Omega u_t(\ln u-\chi v )dx
&=\int_{\Omega}\nabla\cdot(e^{-\chi v}\nabla u-\chi e^{-\chi v} u\nabla v)(\ln u-\chi v)dx\\
&=-\int_{\Omega}e^{-\chi v}u|\nabla(\ln u-\chi v)|^2dx.
\end{split}
\end{equation}
On the other hand, using the fact that $\int_\Omega u_tdx=0$, we have
\begin{equation}\label{3-6}
\begin{split}
\int_\Omega u_t(\ln u-\chi v)dx=\frac{d}{dt}\int_\Omega u\ln udx-\chi \frac{d}{dt}\int_\Omega uv dx+\chi\int_{\Omega} uv_tdx.
\end{split}
\end{equation}
From the second equation of $\eqref{1-1}$, one has
$
u=v_t-\Delta v+v,
$
which gives
\begin{equation}\label{3-8}
\int_{\Omega} uv_tdx=\int_\Omega v_t^2 dx+\frac{1}{2}\frac{d}{dt}\int_\Omega |\nabla v|^2 dx+\frac{1}{2}\frac{d}{dt}\int_\Omega v^2 dx.
\end{equation}
Then the combination of \eqref{3-5}, \eqref{3-6} and \eqref{3-8} gives \eqref{3-3}.
\end{proof}
\subsection{Global existence with subcritical mass}
In this subsection, we first prove the existence of global classical solutions if $\int_\Omega u_0dx<\frac{4\pi}{\chi}$.
\begin{lemma}\label{ln-e}
If  $\int_\Omega u_0dx<\frac{4\pi}{\chi}$, then there exists a constant $C>0$ independent of $t$
{\color{black}such that
\begin{equation}\label{ln}
\int_\Omega u\ln udx\leq C
\end{equation}
and
\begin{equation}\label{ln-1}
\|\nabla v(\cdot,t)\|_{L^2}^2+\int_0^t \|v_t(\cdot,s)\|_{L^2}^2ds\leq C.
\end{equation}

}

%and
%\begin{equation}\label{4-1}
%\|\nabla v(\cdot,t)\|_{L^2}^2\leq C
%\end{equation}
%as well as
%\begin{equation}\label{vt}
%\int_0^t \|v_t(\cdot,s)\|_{L^2}^2ds\leq C.
%\end{equation}
\end{lemma}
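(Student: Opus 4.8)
The plan is to combine the monotonicity of the Lyapunov functional $F$ from Lemma~\ref{L-Y} with the Trudinger--Moser inequality, mimicking the classical argument for the minimal Keller--Segel model \eqref{CKS} but only tracking quantities that are insensitive to the possible degeneracy of $e^{-\chi v}$. Integrating \eqref{3-3} over $(0,t)$ and using $E\ge 0$ gives, with $C_0:=F(u_0,v_0)<\infty$ (finite since $u_0,v_0\in W^{1,\infty}(\Omega)$),
\begin{equation*}
F(u(\cdot,t),v(\cdot,t))\le C_0,\qquad \chi\int_0^t\|v_t(\cdot,s)\|_{L^2}^2\,ds\le C_0-F(u(\cdot,t),v(\cdot,t)).
\end{equation*}
Thus it suffices to bound $F(u(\cdot,t),v(\cdot,t))$ from below and $\int_\Omega u\ln u\,dx$ from above, uniformly in $t$.

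For the lower bound on $F$, first apply the elementary (Jensen) inequality $\int_\Omega\mu g\,dx\le\int_\Omega\mu\ln\mu\,dx+\ln\int_\Omega e^{g}\,dx$ with the probability density $\mu=u/M_0$ and $g=\chi v$; rearranging and recalling \eqref{3-2} yields
\begin{equation*}
F(u,v)\ge\frac{\chi}{2}\|v\|_{W^{1,2}}^2+M_0\ln M_0-M_0\ln\int_\Omega e^{\chi v}\,dx.
\end{equation*}
Next invoke the Trudinger--Moser inequality on the bounded planar domain $\Omega$: for each $\varepsilon>0$ there is $C_\varepsilon>0$ with $\ln\int_\Omega e^{\chi v}\,dx\le(\tfrac{1}{8\pi}+\varepsilon)\chi^2\|\nabla v\|_{L^2}^2+\tfrac{\chi}{|\Omega|}\int_\Omega v\,dx+C_\varepsilon$, and control $\int_\Omega v\,dx=\|v\|_{L^1}\le M_0+\|v_0\|_{L^1}$ by \eqref{L1-v}. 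Since $\|v\|_{W^{1,2}}^2\ge\|\nabla v\|_{L^2}^2$, this gives
\begin{equation*}
F(u,v)\ge\Big(\frac{\chi}{2}-\big(\tfrac{1}{8\pi}+\varepsilon\big)M_0\chi^2\Big)\|\nabla v\|_{L^2}^2-C.
\end{equation*}
Because $M_0=\int_\Omega u_0\,dx<\tfrac{4\pi}{\chi}$, we may fix $\varepsilon>0$ so small that $\delta_0:=\tfrac{\chi}{2}-(\tfrac{1}{8\pi}+\varepsilon)M_0\chi^2>0$ --- this is precisely where, and the only place where, the subcritical hypothesis is used, and it is the heart of the proof. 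Together with $F\le C_0$ it yields $\|\nabla v(\cdot,t)\|_{L^2}\le C$, hence $\|v(\cdot,t)\|_{W^{1,2}}\le C$ by \eqref{L1-v} and Poincar\'e's inequality, and in particular $F(u(\cdot,t),v(\cdot,t))\ge-C$.

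The bound $F(u(\cdot,t),v(\cdot,t))\ge-C$ inserted into the second displayed inequality above gives $\int_0^t\|v_t(\cdot,s)\|_{L^2}^2\,ds\le C$, which with the gradient bound proves \eqref{ln-1}. Finally, for \eqref{ln} I would use the Young-type inequality $\chi uv\le\theta\,u\ln u+\theta\,e^{\chi v/\theta}$, valid for $u>0$ and any $\theta\in(0,1)$, so that $F(u,v)\le C_0$ becomes
\begin{equation*}
(1-\theta)\int_\Omega u\ln u\,dx+\frac{\chi}{2}\|v\|_{W^{1,2}}^2\le C_0+\theta\int_\Omega e^{\chi v/\theta}\,dx;
\end{equation*}
dropping the nonnegative $\|v\|_{W^{1,2}}^2$ term and applying the Trudinger--Moser inequality once more (now legitimate, since $\|\nabla v\|_{L^2}$ and $\|v\|_{L^1}$ are already bounded) to estimate $\int_\Omega e^{\chi v/\theta}\,dx$, and taking e.g. $\theta=\tfrac12$, we obtain \eqref{ln}. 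The main obstacle is the lower bound for $F$ and the sharp matching of the Trudinger--Moser constant with the mass threshold $\tfrac{4\pi}{\chi}$; the remaining steps are routine, and --- unlike in the global existence argument that follows this lemma --- the degeneracy of the motility plays no role here.
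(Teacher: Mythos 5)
Your proof is correct, and the core of it coincides with the paper's: both arguments rest on the monotonicity of $F$ from Lemma \ref{L-Y}, the Jensen/entropy inequality applied with the probability density $u/M_0$, the Trudinger--Moser inequality with constant $\tfrac{1}{8\pi}+\varepsilon$, and the observation that $M_0<\tfrac{4\pi}{\chi}$ makes the resulting coefficient of $\|\nabla v\|_{L^2}^2$ positive. The only genuine difference is how the upper bound \eqref{ln} on $\int_\Omega u\ln u\,dx$ is extracted. The paper perturbs the exponent, writing $-\chi\int_\Omega uv = -(\chi+\eta)\int_\Omega uv+\eta\int_\Omega uv$ as in \eqref{4-2}, and runs the Jensen--Trudinger--Moser step with $e^{(\chi+\eta)v}$; the surviving nonnegative term $\eta\int_\Omega uv\,dx$ in the lower bound \eqref{4-8} then directly yields $\int_\Omega uv\,dx\le C$, and \eqref{ln} follows at once from $\int_\Omega u\ln u\,dx\le F(u,v)+\chi\int_\Omega uv\,dx$. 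You instead keep the exponent $\chi v$ unperturbed, first deduce the gradient bound and $F\ge -C$, and then make a second pass using $\chi uv\le\theta u\ln u+\theta e^{\chi v/\theta}$ together with a second application of Trudinger--Moser (now with the already-bounded $\|\nabla v\|_{L^2}$) to absorb $\theta\int_\Omega u\ln u$ into the left side. Both routes are valid and of comparable length: the paper's $\eta$-trick produces all the bounds in one computation (at the cost of tracking $(\chi+\eta)^2$ in the smallness condition), while your version avoids the auxiliary parameter but invokes Trudinger--Moser twice and needs the extra Young-type inequality. Your closing remark that the degeneracy of $e^{-\chi v}$ plays no role in this lemma is also accurate; it enters only in the later $L^2$ and $L^\infty$ estimates.
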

\begin{proof}
 From $\eqref{3-2}$, we have that
\begin{equation}\label{4-2}
\begin{split}
F(u,v)&=\int_{\Omega}u\ln udx -(\chi+\eta) \int_{\Omega}uvdx+\frac{\chi}{2}\int_\Omega ( v^2+\abs{\nabla v}^2)dx+\eta\int_{\Omega}uvdx\\
     &=-\int_\Omega u\ln \frac{e^{\left(\chi+\eta\right )v}}{u}dx+\frac{\chi}{2}\int_\Omega ( v^2+\abs{\nabla v}^2)dx+\eta\int_{\Omega}uvdx\end{split}
\end{equation}
 Noting that $-\ln z$ is a convex function for all $z\geq 0$ and $\int_\Omega \frac{u}{M_0}dx=1$, which allows us to use the Jensen's inequality to obtain
 \begin{equation}\label{4-5}
 \begin{split}
 -\ln\left(\frac{1}{M_0}\int_\Omega e^{(\chi+\eta)v}dx\right)
 &=-\ln\left(\int_\Omega\frac{ e^{(\chi+\eta)v}}{u}\frac{u}{M_0}dx\right)\\
 &\leq\int_\Omega\left(-\ln\frac{ e^{(\chi+\eta)v}}{u} \right)\frac{u}{M_0}dx=-\frac{1}{M_0}\int_\Omega u\ln\frac{e^{(\chi+\eta)v}}{u} dx.
 \end{split}
 \end{equation}
Then the combination of $\eqref{4-2}$ and $\eqref{4-5}$ gives
 \begin{equation}\label{4-6}
  \begin{split}
  F(u,v)&\geq -M_0\ln\left(\frac{1}{M_0}\int_\Omega e^{(\chi+\eta)v}dx\right)+\frac{\chi}{2}\int_\Omega ( v^2+\abs{\nabla v}^2)dx+\eta\int_{\Omega}uvdx.
  \end{split}
 \end{equation}
Noting the fact $\|v\|_{L^1}\leq c_1$, and using the Trudinger-Moser inequality in two dimensional spaces \cite{Nagai-Funk}, one has
 \begin{equation}\label{4-7}
 \begin{split}
 \int_\Omega e^{\left(\chi+\eta\right )v}dx
 &\leq c_2
 e^{\left(\frac{1}{8\pi}+\varepsilon\right)\left(\chi+\eta\right )^2\|\nabla v\|_{L^2}^2},\\
 \end{split}
 \end{equation}
 which substituted  into $\eqref{4-6}$ gives
 \begin{equation}\label{4-8}
 \begin{split}
 F(u,v)&\geq\left[\frac{\chi}{2}-\left(\frac{1}{8\pi}+\varepsilon\right)\left(\chi+\eta\right )^2M_0\right]\int_\Omega|\nabla v|^2dx+\frac{\chi}{2}\int_\Omega v^2dx+\eta\int_\Omega uvdx-c_3,
 \end{split}
 \end{equation}
where $c_3:=M_0\ln  \frac{c_2}{M_0}$. Since $M_0=\int_\Omega u_0 dx<\frac{4\pi}{\chi}$, it holds that
\begin{equation}\label{4-8*}
\frac{\chi}{2}-\left(\frac{1}{8\pi}+\varepsilon\right)\left(\chi+\eta\right )^2M_0>0,
\end{equation}
by choosing  $\varepsilon>0$ and $\eta>0$ small enough. Substituting $\eqref{4-8*}$ into $\eqref{4-8}$, one has
 \begin{equation*}\label{4-9}
 \begin{split}
 F(u,v)\geq &\frac{\chi}{2}\int_\Omega v^2dx+\eta\int_\Omega uvdx-c_3,\\
  \end{split}
 \end{equation*}
which gives $F(u,v)\geq -c_3$ and $\int_\Omega uvdx\leq \frac{F(u_0,v_0)+c_3}{\eta}$ by  the fact $F(u,v)\leq F(u_0,v_0)$.
Then using the definition of $F(u,v)$ in $\eqref{3-2}$ and  the fact $F(u,v)\leq F(u_0,v_0)$ again, we obtain
\begin{equation*}\label{i1}
\int_\Omega u\ln u dx\leq F(u,v)+\chi \int_\Omega uvdx \leq \left(1+\frac{\chi}{\eta}\right) F(u_0,v_0)+\frac{\chi c_3}{\eta},
\end{equation*}
{\color{black}which gives \eqref{ln}}.
Moreover, we have the following estimate
 \begin{equation}\label{i2}
 \begin{split}
\frac{\chi}{2}\int_\Omega |\nabla v|^2 dx &\leq F(u,v)+\chi \int_\Omega uvdx-\int_\Omega u\ln u dx\\%-\frac{\chi}{2\alpha}\int_\Omega\left(|\nabla v|^2+\beta v^2\right)dx-\frac{\xi}{2\gamma}\int_{\Omega}(\delta w^2+|\nabla w|^2)dx\\
 &\leq F(u,v)+\chi \int_\Omega uv dx+\frac{|\Omega|}{e}\leq \left(1+\frac{\chi}{\eta}\right) F(u_0,v_0)+\frac{\chi c_3}{\eta}+\frac{|\Omega|}{e}.
 \end{split}
 \end{equation}
Integrating \eqref{3-3} and using the fact $F(u,v)\geq -c_3$, it follows that
 \begin{equation*}
\chi \int_0^t \int_\Omega v_t^2 dxdt+\int_0^t\int_{\Omega}e^{-\chi v}u|\nabla(\ln u-\chi v)|^2dxdt\leq F(u_0,v_0)-F(u,v)\leq F(u_0,v_0)+c_3,
 \end{equation*}
 which yields
 \begin{equation}\label{i3}
 \int_0^t \int_\Omega v_t^2 dxdt\leq \frac{F(u_0,v_0)+c_3}{\chi}.
 \end{equation}
Thus the combination of \eqref{i2}-\eqref{i3} {\color{black} gives \eqref{ln-1}} and  completes the proof.

\end{proof}

\begin{lemma}\label{kb-kb}
Let $(u,v)$ be a solution of  (\ref{1-1}). If  $\int_\Omega u_0(x)dx<\frac{4\pi}{\chi}$, then there exists a constant $C>0$ independent of $t$ such that the following inequality holds
{\color{black}\begin{equation}\label{Lr2}
\int_t^{t+\tau}\int_\Omega e^{-\chi v}u^2dxds\leq C,\ \
\ \mathrm{for\ all}\  t\in (0,\widetilde{T}_{max}).
\end{equation}}
where
\begin{equation}\label{tau}
\tau:=\min\{1,\frac{1}{2}T_{max}\}\ \mathrm{and}\ \widetilde{T}_{max}=\begin{cases}
T_{max}-\tau \ &\mathrm{if}\ T_{max}<\infty,\\
\infty \ &\mathrm{if}\ T_{max}=\infty.
\end{cases}
\end{equation}
\end{lemma}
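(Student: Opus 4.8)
The plan is to trade the missing a priori $L^{2}$-bound on $u$ for a bound on the weighted quantity $\int_\Omega e^{-\chi v}u^{2}\,dx$ by testing the first equation of \eqref{1-1} against the Neumann potential of $u-\bar u$. Set $\bar u:=\frac{1}{|\Omega|}\int_\Omega u\,dx=\frac{M_0}{|\Omega|}$, which is constant in time by \eqref{L1-u}, and for $t\in(0,T_{max})$ define $\phi(\cdot,t):=\mathcal A^{-1}\bigl(u(\cdot,t)-\bar u\bigr)$, i.e. the solution of $-\Delta\phi=u-\bar u$ in $\Omega$ with $\partial_\nu\phi=0$ on $\partial\Omega$ and $\int_\Omega\phi\,dx=0$. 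Since Lemma \ref{ln-e} supplies a \emph{uniform} bound $\int_\Omega u\ln u\,dx\le C$, Lemma \ref{kb*} (with $f=u$) gives the uniform-in-$t$ estimate
\begin{equation*}
0\le\int_\Omega u\phi\,dx=\int_\Omega(u-\bar u)\phi\,dx=\int_\Omega|\nabla\phi|^{2}\,dx=\int_\Omega\bigl|\mathcal A^{-1/2}(u-\bar u)\bigr|^{2}\,dx\le C,
\end{equation*}
and this bounded nonnegative functional will serve as the ``energy'' in the time-local estimate.

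Multiplying the first equation of \eqref{1-1} by $\phi$ and integrating by parts twice (using $\partial_\nu(e^{-\chi v}u)=e^{-\chi v}(\partial_\nu u-\chi u\,\partial_\nu v)=0$, $\partial_\nu\phi=0$ and $\Delta\phi=-(u-\bar u)$), and then using $0<e^{-\chi v}\le 1$ together with \eqref{L1-u}, yields for $t\in(0,T_{max})$
\begin{equation*}
\int_\Omega e^{-\chi v}u^{2}\,dx=\bar u\int_\Omega e^{-\chi v}u\,dx-\int_\Omega u_t\phi\,dx\le\frac{M_0^{2}}{|\Omega|}-\int_\Omega u_t\phi\,dx .
\end{equation*}
Next I would write $\int_\Omega u_t\phi\,dx=\frac{d}{dt}\int_\Omega u\phi\,dx-\int_\Omega u\phi_t\,dx$ and identify $\phi_t$ explicitly: differentiating $-\Delta\phi=u-\bar u$ in $t$ (legitimate since $(u,v)$ is classical and positive on $(0,T_{max})$ by Lemma \ref{LS}) gives $-\Delta\phi_t=u_t=\Delta(e^{-\chi v}u)$, and since $\phi_t$ and $e^{-\chi v}u-\overline{e^{-\chi v}u}$ both have zero mean and zero Neumann data, uniqueness forces $\phi_t=-\bigl(e^{-\chi v}u-\overline{e^{-\chi v}u}\bigr)$. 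Hence, using $\overline{e^{-\chi v}u}\le\frac{1}{|\Omega|}\int_\Omega u\,dx=\frac{M_0}{|\Omega|}$,
\begin{equation*}
\int_\Omega u\phi_t\,dx=-\int_\Omega e^{-\chi v}u^{2}\,dx+\overline{e^{-\chi v}u}\,M_0\le-\int_\Omega e^{-\chi v}u^{2}\,dx+\frac{M_0^{2}}{|\Omega|}.
\end{equation*}

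Collecting the last three displays, the term $\int_\Omega u\phi_t\,dx$ contributes a copy of $-\int_\Omega e^{-\chi v}u^{2}\,dx$ that is absorbed on the left, leaving the pointwise differential inequality
\begin{equation*}
2\int_\Omega e^{-\chi v}u^{2}\,dx+\frac{d}{dt}\int_\Omega u\phi\,dx\le\frac{2M_0^{2}}{|\Omega|},\qquad t\in(0,T_{max}).
\end{equation*}
Integrating over $[t,t+\tau]\subset(0,T_{max})$ (admissible by the definition of $\tau$ and $\widetilde{T}_{max}$ in \eqref{tau}), dropping the nonnegative quantity $\int_\Omega u\phi\,dx$ at time $t+\tau$, and using $\tau\le 1$ together with the uniform bound above, I obtain
\begin{equation*}
2\int_t^{t+\tau}\int_\Omega e^{-\chi v}u^{2}\,dx\,ds\le\frac{2M_0^{2}}{|\Omega|}\,\tau+\int_\Omega u\phi(\cdot,t)\,dx\le\frac{2M_0^{2}}{|\Omega|}+C,
\end{equation*}
which is \eqref{Lr2}. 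The main obstacle — and the point that makes the argument close — is precisely this choice of test function: without $L^{2}$-control on $u$ the only usable information is the entropy bound $\int_\Omega u\ln u\le C$, and testing against $\phi=\mathcal A^{-1}(u-\bar u)$ both converts the unavoidable term $\int_\Omega u_t\phi$ into the time derivative of a quantity controlled by Lemma \ref{kb*} and, via the clean identity $\phi_t=-(e^{-\chi v}u-\overline{e^{-\chi v}u})$, reproduces $-\int_\Omega e^{-\chi v}u^{2}$, so the inequality becomes self-improving; the remaining ingredients (the two integrations by parts, the differentiation of the elliptic problem for $\phi$) are routine given the classical regularity in Lemma \ref{LS}.
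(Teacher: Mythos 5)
Your proposal is correct and is essentially the paper's own argument: your test function $\phi=\mathcal A^{-1}(u-\bar u)$ is exactly what the paper multiplies the equation $(u-\bar u)_t=-\mathcal A\bigl(e^{-\chi v}u-\overline{e^{-\chi v}u}\bigr)$ by, your quantity $\int_\Omega u\phi\,dx$ coincides with the paper's $\int_\Omega|\mathcal A^{-1/2}(u-\bar u)|^2dx$ (controlled in both cases via the entropy bound of Lemma \ref{ln-e} and Lemma \ref{kb*}), and your differential inequality $2\int_\Omega e^{-\chi v}u^2dx+\frac{d}{dt}\int_\Omega u\phi\,dx\le \frac{2M_0^2}{|\Omega|}$ is the paper's \eqref{0-5} up to replacing $(u-\bar u)^2$ by $u^2$ under the weight. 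The only cosmetic difference is that you unpack the time derivative via the explicit identity $\phi_t=-(e^{-\chi v}u-\overline{e^{-\chi v}u})$ and work with $e^{-\chi v}u^2$ throughout, whereas the paper keeps $e^{-\chi v}(u-\bar u)^2$ and converts at the very end.
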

\begin{proof} Using the definition of $\mathcal{A}$ in Lemma \ref{kb*}, we can rewrite the system \eqref{1-1} as follows
\begin{equation}\label{AE}
\begin{cases}
(u-\bar{u})_t=-\mathcal{A} (e^{-\chi v}u-\overline{e^{-\chi v}u}),&x\in\Omega, t>0,\\
\frac{\partial u}{\partial \nu}=0, &x\in\partial \Omega, t>0,\\
\end{cases}
\end{equation}
Then multiplying \eqref{AE} by $\mathcal{A}^{-1}(u-\bar{u})$ and integrating the result by parts, we have
\begin{equation}\label{0-3}
\begin{split}
\frac{1}{2}\frac{d}{dt}\int_{\Omega}|\mathcal{A}^{-\frac{1}{2}}(u-\bar{u})|^{2}dx
&=-\int_{\Omega}\mathcal{A}^{-1}\left(u-\bar{u}\right)\cdot\mathcal{A}\left(e^{-\chi v}u-\overline{e^{-\chi v}u}\right)dx\\
&=-\int_{\Omega}(u-\bar{u})\cdot\left(e^{-\chi v}u-\overline{e^{-\chi v}u}\right)dx.\\
\end{split}
\end{equation}
On the other hand, with some direct calculations and noting that $\bar{u}=\frac{1}{|\Omega|}\int_\Omega udx=\frac{M_0}{|\Omega|}$, it holds
\begin{equation}\label{0-4}
\begin{split}
-\int_{\Omega}(u-\bar{u})\cdot\left(e^{-\chi v}u-\overline{e^{-\chi v}u}\right)dx
&=-\int_{\Omega}(u-\bar{u})\Big(e^{-\chi v}(u-\bar{u})+e^{-\chi v}\bar{u}-\overline{e^{-\chi v}u}\Big)dx
\\
&= -\int_{\Omega}e^{-\chi v}(u-\bar{u})^{2}dx+\bar{u}\int_\Omega (\bar{u}-u)e^{-\chi v}dx\\
&\leq -\int_{\Omega}e^{-\chi v}(u-\bar{u})^{2}dx+\frac{M_0^2}{|\Omega|}.
\end{split}
\end{equation}
Then we substitute \eqref{0-4} into \eqref{0-3} to get
\begin{equation}\label{0-5}
\frac{d}{dt}\int_{\Omega} |\mathcal{A}^{-\frac{1}{2}}(u-\bar{u})|^2dx+2\int_{\Omega}e^{-\chi v}(u-\bar{u})^{2}dx\leq \frac{2M_0^2}{|\Omega|}.
\end{equation}
{\color{black}{Since $\int_\Omega u_0dx<\frac{4\pi}{\chi}$, then from Lemma \ref{ln-e} and Lemma \ref{kb*}, we can find a constant $c_1>0$ such that
\begin{equation}\label{0-4*}
\int_{\Omega} |\mathcal{A}^{-\frac{1}{2}}(u-\bar{u})|^2dx\leq c_1.
\end{equation}
}}
Then integrating \eqref{0-5} over $(t,t+\tau)$ and using \eqref{0-4*}, one has
\begin{equation*}
\begin{split}
\int_t^{t+\tau}\int_\Omega e^{-\chi v}(u-\bar{u})^2 dxds\leq \frac{M_0^2}{|\Omega|}\tau\leq \frac{M_0^2}{|\Omega|} ,
\end{split}
\end{equation*}
which gives
\begin{equation*}
\begin{split}
\int_t^{t+\tau}\int_\Omega e^{-\chi v}u^2dxds
&=\int_{t}^{t+\tau}\int_\Omega e^{-\chi v}(u-\bar{u}+\bar{u})^2dxds\\
&\leq 2\int_{t}^{t+\tau}\int_\Omega e^{-\chi v}(u-\bar{u})^2dxds+2\int_{t}^{t+\tau}\int_\Omega \bar{u}^2dxds\leq \frac{4M_0^2}{|\Omega|},
\end{split}
\end{equation*}
and hence \eqref{Lr2} follows. Then we complete the proof.
\end{proof}

\begin{lemma}
Suppose the conditions in Lemma \ref{kb-kb} hold. Then there exists a constant $C>0$ independent of $t$ such that
{\color{black}\begin{equation}\label{L2i}
\int_t^{t+\tau}\|v(\cdot,s)\|_{L^\infty}ds\leq C,\ \
\ \mathrm{for\ all}\  t\in (0,\widetilde{T}_{max}).
\end{equation}}
where $\tau$ is defined by \eqref{tau}.
\end{lemma}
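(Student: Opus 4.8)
The plan is to fix an exponent $p\in(1,2)$, establish the space-time bound $\int_t^{t+\tau}\|u(\cdot,s)\|_{L^p}^2\,ds\le C$, and then read off the conclusion from elliptic regularity applied to the $v$-equation. The point is that Lemma \ref{kb-kb} only controls the \emph{degenerately weighted} quantity $\int_\Omega e^{-\chi v}u^2\,dx$, so one must trade the weight for a loss in the integrability exponent, and this is exactly where the Trudinger--Moser inequality re-enters: since \eqref{ln-1} gives $\|\nabla v(\cdot,t)\|_{L^2}\le C$ uniformly in $t$ and \eqref{L1-v} gives $\|v(\cdot,t)\|_{L^1}\le C$, the two-dimensional Trudinger--Moser inequality (as in \eqref{4-7}) yields $\int_\Omega e^{\alpha v}\,dx\le C(\alpha)$ for every $\alpha>0$, uniformly in $t$.

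With this in hand, for $p\in(1,2)$ I would write, using H\"older's inequality with exponents $2/p$ and $2/(2-p)$,
\begin{equation*}
\begin{split}
\int_\Omega u^p\,dx&=\int_\Omega\bigl(e^{-\chi v}u^2\bigr)^{p/2}e^{\frac{p}{2}\chi v}\,dx\\
&\le\Bigl(\int_\Omega e^{-\chi v}u^2\,dx\Bigr)^{p/2}\Bigl(\int_\Omega e^{\frac{p}{2-p}\chi v}\,dx\Bigr)^{\frac{2-p}{2}}.
\end{split}
\end{equation*}
The last factor is bounded by the Trudinger--Moser step above (here $\frac{p}{2-p}>0$ is finite), so $\|u(\cdot,s)\|_{L^p}^2\le C\int_\Omega e^{-\chi v}u^2\,dx$; integrating over $(t,t+\tau)$ and invoking \eqref{Lr2} gives $\int_t^{t+\tau}\|u(\cdot,s)\|_{L^p}^2\,ds\le C$. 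Since $p<2$ we also have $\|v_t(\cdot,s)\|_{L^p}\le|\Omega|^{1/p-1/2}\|v_t(\cdot,s)\|_{L^2}$, and \eqref{ln-1} gives $\int_t^{t+\tau}\|v_t(\cdot,s)\|_{L^2}^2\,ds\le C$.

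Finally, $v(\cdot,s)$ solves $-\Delta v+v=u-v_t$ with homogeneous Neumann data, so the Agmon--Douglis--Nirenberg $L^p$ estimate (used in Lemma \ref{2L1} for $p=2$) gives $\|v(\cdot,s)\|_{W^{2,p}}\le C\|u(\cdot,s)-v_t(\cdot,s)\|_{L^p}\le C(\|u(\cdot,s)\|_{L^p}+\|v_t(\cdot,s)\|_{L^2})$; since $p>1$, the Sobolev embedding $W^{2,p}(\Omega)\hookrightarrow L^\infty(\Omega)$ holds in two dimensions, whence $\|v(\cdot,s)\|_{L^\infty}\le C(\|u(\cdot,s)\|_{L^p}+\|v_t(\cdot,s)\|_{L^2})$ for a.e.\ $s$. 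Integrating over $(t,t+\tau)$ and applying the Cauchy--Schwarz inequality together with the two space-time bounds just obtained gives $\int_t^{t+\tau}\|v(\cdot,s)\|_{L^\infty}\,ds\le C\tau^{1/2}\bigl(\int_t^{t+\tau}(\|u\|_{L^p}^2+\|v_t\|_{L^2}^2)\,ds\bigr)^{1/2}\le C$, which is \eqref{L2i}. The delicate point is the very first step: one needs $p<2$ so that H\"older's inequality can absorb the reciprocal weight $e^{\chi v}$ through Trudinger--Moser, while simultaneously $p>1$ is required for the embedding into $L^\infty$; it is precisely the availability of this window $1<p<2$ in dimension two that makes the argument go through.
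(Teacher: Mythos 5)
Your proof is correct and follows essentially the same route as the paper: the paper fixes the exponent $p=\tfrac{3}{2}$ (so that $\tfrac{p}{2-p}=3$, giving the factor $\int_\Omega e^{3\chi v}\,dx$ controlled by Trudinger--Moser), estimates $\|v\|_{L^\infty}\le c\|v\|_{W^{2,3/2}}$ via the elliptic equation $-\Delta v+v=u-v_t$, and closes with \eqref{Lr2} and \eqref{ln-1}, exactly as you do with a general $p\in(1,2)$ and Cauchy--Schwarz in time in place of the pointwise Young inequality.
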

\begin{proof}
Using the Sobolev embedding theorem and applying the Agmon-Douglis-Nirenberg $L^p$ estimates(see \cite{ADN-1959,ADN-1964}) to the system \eqref{EE}, we have
\begin{equation}\label{4-7}
\begin{split}
\|v\|_{L^\infty}
\leq c_1
\|v\|_{W^{2,\frac{3}{2}}}
&\leq c_2\|(u-v_t)\|_{L^\frac{3}{2}}\\
&\leq 2c_2(\|u\|_{L^\frac{3}{2}}+\|v_t\|_{L^\frac{3}{2}})\\
&\leq 2c_2\left(\int_\Omega u^2 e^{-\chi v}dx\right)^\frac{1}{2}\cdot\left(\int_\Omega e^{3\chi v}dx\right)^\frac{1}{6}+2c_2\left(\int_\Omega v_t^2dx\right)^\frac{1}{2}|\Omega|^\frac{1}{6}\\
&\leq c_2^2\int_\Omega u^2 e^{-\chi v}dx+\left(\int_\Omega e^{3\chi v}dx\right)^\frac{1}{3}+c_2^2\|v_t\|_{L^2}^2+|\Omega|^\frac{1}{3}.
\end{split}
\end{equation}
On the other hand, using the fact $\|v\|_{L^1}+\|\nabla v\|_{L^2}\leq c_3$ (see Lemma \ref{ln-e} and Lemma \ref{kb*})  and applying  the Trudinger-Moser inequality in two dimensional spaces \cite{Nagai-Funk}, one has
$
 \int_\Omega e^{3\chi v}dx\leq c_4,\\
$
 which, substituted into \eqref{4-7} and combined with \eqref{Lr2} and \eqref{ln-1}, gives
 \begin{equation*}
 \begin{split}
 \int_t^{t+\tau}\|v(\cdot,s)\|_{L^\infty}ds
 &\leq c_2^2\int_t^{t+\tau}\int_\Omega u^2 e^{-\chi v}dx+c_2^2\int_t^{t+\tau}\|v_t(\cdot,s)\|_{L^2}^2ds+c_4\leq c_5,
 \end{split}
 \end{equation*}
 which yields \eqref{L2i}.
\end{proof}

With the above results in hand, we shall show that  there exists a constant $C>0$ such that $\|u(\cdot,t)\|_{L^2}\leq C$ for any $t\in(0,T_{max})$, which will be used to rule out the possibility of degeneracy. Precisely, we have the following results.
\begin{lemma}\label{2L2}
 Let $\Omega\subset\R^2$ be a bounded domain with smooth boundary and $\int_\Omega u_0dx<\frac{4\pi}{\chi}$. If $(u,v)$ is a solution  of system (\ref{1-1}) in $\Omega\times(0,T_{max})$, then there exists a positive constant $C$ independent of $t$ such that
\begin{equation}\label{2L2-1}
\|u(\cdot,t)\|_{L^2}\leq C , \ \ \mathrm{for\ all}\ t\in(0,T_{max}).
\end{equation}
\end{lemma}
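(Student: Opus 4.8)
The plan is to derive a differential inequality for $y(t):=\int_\Omega u^2\,dx$ whose dissipative term is the \emph{degenerate} quantity $D(t):=\int_\Omega e^{-\chi v}|\nabla u|^2\,dx$, and then to close it by a Gronwall argument anchored at a well-chosen time, using the estimates already at hand: $\int_t^{t+\tau}\int_\Omega e^{-\chi v}u^2\le C$ from Lemma \ref{kb-kb}, $\int_t^{t+\tau}\|v(\cdot,s)\|_{L^\infty}\,ds\le C$ from \eqref{L2i}, and $\|\nabla v\|_{L^2}\le C$, $\int_0^t\|v_t\|_{L^2}^2\,ds\le C$ from Lemma \ref{ln-e}.

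First I would test the first equation of \eqref{1-1} with $u$ and integrate by parts, obtaining
\[
\frac12\frac{d}{dt}\int_\Omega u^2\,dx+\int_\Omega e^{-\chi v}|\nabla u|^2\,dx=\chi\int_\Omega e^{-\chi v}u\,\nabla u\cdot\nabla v\,dx .
\]
Integrating the right‐hand side by parts once more and substituting $\Delta v=v_t-u+v$ from the second equation turns the drift into a combination of $\int_\Omega e^{-\chi v}u^2|\nabla v|^2$, $\int_\Omega e^{-\chi v}u^2|v_t|$, $\int_\Omega e^{-\chi v}u^3$ and a term with a favorable sign. All of these are superlinear in $u$, and since $e^{-\chi v}$ may degenerate, only $D(t)$ (not $\|\nabla u\|_{L^2}^2$) is available to absorb them. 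The key device is to work with $z:=e^{-\chi v/2}u$: then $\|z\|_{L^2}^2=\int_\Omega e^{-\chi v}u^2$ is exactly the quantity whose $[t,t+\tau]$‐integral Lemma \ref{kb-kb} controls, $\|\nabla z\|_{L^2}^2\le 2D(t)+\frac{\chi^2}{2}\int_\Omega e^{-\chi v}u^2|\nabla v|^2$, and $\|e^{-\chi v}u^2\|_{L^2}=\|z\|_{L^4}^2$. Applying the two–dimensional Gagliardo--Nirenberg inequality $\|z\|_{L^4}^2\le C\|\nabla z\|_{L^2}\|z\|_{L^2}+C\|z\|_{L^2}^2$ to these pieces, using $\|\nabla v\|_{L^4}^2\le C\|\nabla v\|_{L^2}\|v\|_{W^{2,2}}+C\|\nabla v\|_{L^2}^2\le C(\|u\|_{L^2}+\|v_t\|_{L^2}+1)$ via Lemma \ref{2L1}, and absorbing $D(t)$ and the bad term $\int_\Omega e^{-\chi v}u^2|\nabla v|^2$ by Young's inequality, I expect to reach an inequality of the form
\[
\frac{d}{dt}y(t)\le g(t)\,y(t)+h(t),\qquad g(t),h(t)\ge 0,
\]
with $\int_t^{t+\tau}g\le C$ and $\int_t^{t+\tau}h\le C$ on $(0,\widetilde{T}_{max})$; here $g$ is built essentially from $\int_\Omega e^{-\chi v}u^2$ and $\|v\|_{L^\infty}$, while $h$ also involves $\|v_t\|_{L^2}^2$.

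To run Gronwall forward I need a bounded initial value in each window. Since $\int_{t-\tau}^{t}\int_\Omega e^{-\chi v}u^2\,ds$ and $\int_{t-\tau}^{t}\|v(\cdot,s)\|_{L^\infty}\,ds$ are both bounded, a Chebyshev/measure argument produces a time $s_0\in[t-\tau,t]$ at which both $\int_\Omega e^{-\chi v(s_0)}u(s_0)^2$ and $\|v(s_0)\|_{L^\infty}$ are bounded, whence $\|u(s_0)\|_{L^2}^2\le e^{\chi\|v(s_0)\|_{L^\infty}}\int_\Omega e^{-\chi v(s_0)}u(s_0)^2\le C$. Integrating the differential inequality from $s_0$ to $t$ then gives $y(t)\le C$ for $t\ge\tau$, and the interval $(0,\tau)$ is covered by the continuity (local boundedness) of the solution from Lemma \ref{LS}. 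I expect the main obstacle to be the middle step: taming the superlinear drift terms with only the weighted gradient $D(t)$ in hand, and verifying that every resulting coefficient and forcing function has a uniformly bounded $[t,t+\tau]$‐integral — in particular the $\int_\Omega e^{-\chi v}u^2|v_t|$ contribution, where one must play the space–time $L^2$ control of $e^{-\chi v/2}u$ against $\int_0^t\|v_t\|_{L^2}^2\,ds\le C$. This is precisely the ``weighted energy estimate'' with $e^{-\chi v}$ treated as a weight that the paper advertises.
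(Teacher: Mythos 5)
Your proposal follows essentially the same route as the paper's proof: test the first equation with $u$, pass to the weighted quantity $z=e^{-\chi v/2}u$ so that the dissipation becomes $\|\nabla z\|_{L^2}^2$ and the window bound \eqref{Lr2} controls $\|z\|_{L^2}^2$, estimate the bad terms by the two-dimensional Gagliardo--Nirenberg inequality together with \eqref{2Lnm}, and close with a Gronwall argument anchored at a time $t_0$ in each window $((t-\tau)_+,t)$ where \eqref{Lr2} and \eqref{L2i} simultaneously give a bounded starting value $\|u(\cdot,t_0)\|_{L^2}$. The only (harmless) deviation is your second integration by parts and substitution of $\Delta v=v_t+v-u$, which introduces the extra terms $\int_\Omega e^{-\chi v}u^3$ and $\int_\Omega e^{-\chi v}u^2|v_t|$; the paper instead bounds $\chi\int_\Omega e^{-\chi v}u\nabla u\cdot\nabla v$ directly by Cauchy--Schwarz, keeping only $\int_\Omega e^{-\chi v}u^2|\nabla v|^2$ as the term to absorb and arriving at the same differential inequality $\frac{d}{dt}\|u\|_{L^2}^2\leq C\bigl(\|e^{-\chi v/2}u\|_{L^2}^2+\|v_t\|_{L^2}^2+1\bigr)\|u\|_{L^2}^2$.
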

\begin{proof}
We multiply the first equation of  (\ref{1-1}) by $u$ and integrate the result by parts with respect to $x$.  Then using the H\"{o}lder inequality and Young's inequality, we have
\begin{equation*}\label{2L2-2*}
\begin{split}
\frac{1}{2}\frac{d}{dt}\int_\Omega u^2dx+\int_\Omega e^{-\chi v}|\nabla u|^2dx&=\chi\int_\Omega e^{-\chi v} u\nabla u\cdot \nabla vdx\\
&\leq \chi\left(\int_\Omega e^{-\chi v} |\nabla u|^2dx\right)^\frac{1}{2}\left(\int_\Omega e^{-\chi v} u^2|\nabla v|^2dx \right)^\frac{1}{2} \\
&\leq \frac{1}{2}\int_\Omega e^{-\chi v}|\nabla u|^2dx+\frac{\chi^2}{2}\int_\Omega e^{-\chi v} u^2|\nabla v|^2dx,
\end{split}
\end{equation*}
%where we have  used the Young's inequality:
%\begin{equation}
%\label{YI}AB\leq \varepsilon A^p+(\varepsilon p)^{-\frac{q}{p}}q^{-1}B^q \ \ \mathrm{for\ any}\ \ \varepsilon >0, \ \ p,q>0,\ \ \frac{1}{p}+\frac{1}{q}=1.
%\end{equation}
which yields
\begin{equation}\label{2L2-2}
\frac{d}{dt}\int_\Omega u^2dx+\int_\Omega e^{-\chi v}|\nabla u|^2dx\leq \chi^2\int_\Omega e^{-\chi v} u^2|\nabla v|^2dx.
\end{equation}
 On the other hand,  using the fact $|X+Y|^2\geq {\color{black}\frac{1}{2}X^2}-Y^2$ and
$
e^{-\frac{\chi}{2}v}\nabla u= \nabla (e^{-\frac{\chi}{2}v} u)+\frac{\chi}{2}e^{-\frac{\chi}{2}v} u\nabla v,
$
 we have
\begin{equation*}\label{2L2-3}
e^{-\chi v} |\nabla u|^2
\geq {\color{black}\frac{1}{2}|\nabla (e^{-\frac{\chi}{2}v} u)|^2}-\frac{\chi^2}{4} e^{-\chi v} u^2|\nabla v|^2,
\end{equation*}
which substituted  into \eqref{2L2-2} gives
\begin{equation}\label{2L2-4}
\frac{d}{dt}\int_\Omega u^2dx+\frac{1}{2}\int_\Omega  |\nabla (e^{-\frac{\chi}{2}v} u)|^2dx
\leq \frac{5\chi^2}{4}\int_\Omega e^{-\chi v}  u^2|\nabla v|^2dx\\
\leq  \frac{5\chi^2}{4}\|\nabla v\|_{L^4}^2\|e^{-\frac{\chi}{2}v}u\|_{L^4}^2.
\end{equation}
Moreover, the Gagliardo-Nirenberg inequality along with the facts $\|\nabla v\|_{L^2}\leq c_1$ and $\|\nabla v\|_{L^4}\leq c_2(\|\Delta v\|_{L^2}^\frac{1}{2}\|\nabla v\|_{L^2}^\frac{1}{2}+\|\nabla v\|_{L^2})$ (see \cite[Lemma 2.5]{JKW-SIAP-2018}) entails  that
\begin{equation*}\label{2L2-6}
\begin{split}
&\frac{5\chi^2}{4}\|\nabla v\|_{L^4}^2\|e^{-\frac{\chi}{2}v}u\|_{L^4}^2\\
&\leq c_3(\|\Delta v\|_{L^2}\|\nabla v\|_{L^2}+\|\nabla v\|_{L^2}^2)(\|\nabla (e^{-\frac{\chi}{2} v} u)\|_{L^2}\|e^{-\frac{\chi}{2} v} u\|_{L^2}+\|e^{-\frac{\chi}{2} v} u\|_{L^2}^2)\\
%&\leq c_1c_3(\|\Delta v\|_{L^2}+c_1)(\|\nabla (e^{-\frac{\chi}{2} v} u)\|_{L^2}\|e^{-\frac{\chi}{2} v} u\|_{L^2}+\|e^{-\frac{\chi}{2} v} u\|_{L^2}^2)\\
&\leq c_1c_3\|\Delta v\|_{L^2}\|\nabla(e^{-\frac{\chi}{2}v}u)\|_{L^2}\|e^{-\frac{\chi}{2}v}u\|_{L^2}+c_1c_3\|\Delta v\|_{L^2}\|e^{-\frac{\chi}{2}v}u\|_{L^2}^2\\
&\ \ \ \ +c_1^2c_3\|\nabla (e^{-\frac{\chi}{2} v} u)\|_{L^2}\|e^{-\frac{\chi}{2} v} u\|_{L^2}+c_1^2c_3\|e^{-\frac{\chi}{2}v}u\|_{L^2}^2\\
&\leq \frac{1}{2}\|\nabla (e^{-\frac{\chi}{2} v} u)\|_{L^2}^2+2c_1^2c_3^2\|\Delta v\|_{L^2}^2\|e^{-\frac{\chi}{2}v}u\|_{L^2}^2+\frac{1+4c_1^4c_3^2}{4}\|e^{-\frac{\chi}{2}v}u\|_{L^2}^2\\
&\leq \frac{1}{2}\|\nabla (e^{-\frac{\chi}{2} v} u)\|_{L^2}^2+c_4(\|\Delta v\|_{L^2}^2+1)\|e^{-\frac{\chi}{2} v}u\|_{L^2}^2,\\
\end{split}
\end{equation*}
which, combined with \eqref{2Lnm} and the fact $e^{-\chi v}\leq 1$, gives
\begin{equation}\label{2L2-6}
\begin{split}
\frac{5\chi^2}{4}\|\nabla v\|_{L^4}^2\|e^{-\frac{\chi}{2}v}u\|_{L^4}^2
&\leq \frac{1}{2}\|\nabla (e^{-\frac{\chi}{2} v} u)\|_{L^2}^2+c_5\left(\|u\|_{L^2}^2+\|v_t\|^2_{L^2}+1\right)\|e^{-\frac{\chi}{2} v}u\|_{L^2}^2\\
&\leq \frac{1}{2}\|\nabla (e^{-\frac{\chi}{2} v} u)\|_{L^2}^2+ c_5\left(\|e^{-\frac{\chi}{2} v}u\|_{L^2}^2+\|v_t\|^2_{L^2}+1\right)
\|u\|_{L^2}^2.
\end{split}
\end{equation}
Substituting \eqref{2L2-6} into \eqref{2L2-4}, one has
\begin{equation}\label{2L2-6*}
\frac{d}{dt}\|u\|_{L^2}^2\leq c_5\left(\|e^{-\frac{\chi}{2} v}u\|_{L^2}^2+\|v_t\|^2_{L^2}+1\right)
\|u\|_{L^2}^2.
\end{equation}
For any $t\in (0,T_{max})$ and in the case of either $t\in(0,\tau)$ or $t\geq \tau$ with $\tau=\min \Big\{ 1,\frac{1}{2} T_{max}\Big\}$, from \eqref{Lr2} we can find a $t_0=t_0(t)\in ((t-\tau)_+, t)$ such that $t_0\geq 0$ and
$
\int_\Omega e^{-\chi v(x,t_0)}u^2(x,t_0)dx\leq c_6,
$
which, along with $\eqref{L2i}$, implies that
\begin{equation}\label{2L2-k}
\int_\Omega u^2(x,t_0)dx\leq c_7.
\end{equation}
Integrating \eqref{2L2-6*} over $(t_0,t)$ and noting the fact $t\leq t_0+\tau\leq t_0+1$, then we can  use \eqref{ln-1}, \eqref{Lr2} and \eqref{2L2-k} to obtain
\begin{equation*}
\begin{split}
\|u(\cdot,t)\|_{L^2}^2\leq \|u(\cdot,t_0)\|_{L^2}^2 e^{c_5\int_{t_0}^t\|e^{-\frac{\chi}{2} v}u\|_{L^2}^2ds+c_5\int_{t_0}^t \|v_t\|_{L^2}^2ds+c_5 \tau}\leq c_8\|u_0\|_{L^2}^2 ,
\end{split}
\end{equation*}
which gives \eqref{2L2-1}.
\end{proof}

\begin{lemma}\label{boundedness-1}Let the conditions in Lemma \ref{2L2} hold. Suppose $(u,v)$ is a solution of  \eqref{1-1} in $\Omega\times(0,T_{max})$, then one has
\begin{equation}\label{LI-u}
\|u(\cdot,t)\|_{L^\infty}\leq C, \ \mathrm{for\ all}\ \ t\in(0,T_{max}),
\end{equation}
where $C>0$ is a constant independent of $t$.
\end{lemma}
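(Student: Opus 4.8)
The plan is to first remove the apparent degeneracy of the coefficient $e^{-\chi v}$ and then run a Moser--Alikakos type iteration on the $L^p$-norms of $u$.

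\textbf{Step 1 (a uniform $L^\infty$-bound for $v$).} Armed with $\|u(\cdot,t)\|_{L^2}\le C$ from Lemma \ref{2L2}, I would represent $v$ through the variation-of-constants formula for the second equation of \eqref{1-1},
\[
v(\cdot,t)=e^{t(\Delta-1)}v_0+\int_0^t e^{(t-s)(\Delta-1)}u(\cdot,s)\,ds,
\]
and invoke the standard $L^p$--$L^q$ smoothing estimates for the Neumann heat semigroup on the planar domain $\Omega$. Since in two dimensions $\|e^{\sigma(\Delta-1)}w\|_{L^\infty}\le c(1+\sigma^{-1/2})e^{-\sigma}\|w\|_{L^2}$ and $\|\nabla e^{\sigma(\Delta-1)}w\|_{L^q}\le c(1+\sigma^{-1+1/q})e^{-\sigma}\|w\|_{L^2}$, and both $\sigma^{-1/2}$ and $\sigma^{-1+1/q}$ (for $q<\infty$) are integrable near $\sigma=0$, this yields $\|v(\cdot,t)\|_{L^\infty}\le C$ and $\|\nabla v(\cdot,t)\|_{L^q}\le C(q)$ for every $q\in[1,\infty)$, uniformly in $t\in(0,T_{max})$. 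In particular $e^{-\chi v}\ge e^{-\chi\|v\|_{L^\infty}}=:\delta_0>0$, so the first equation of \eqref{1-1} is now uniformly parabolic.

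\textbf{Step 2 (differential inequality for $L^p$-norms).} For $p\ge2$ I would test the first equation of \eqref{1-1} by $u^{p-1}$, exactly as in the proof of Lemma \ref{2L2}, obtaining
\[
\frac1p\frac{d}{dt}\|u\|_{L^p}^p+\frac{2(p-1)}{p^2}\int_\Omega e^{-\chi v}|\nabla u^{p/2}|^2\,dx\le\frac{\chi^2(p-1)}{2}\int_\Omega e^{-\chi v}u^p|\nabla v|^2\,dx .
\]
Using $e^{-\chi v}\ge\delta_0$ on the left and $e^{-\chi v}\le1$ on the right, then H\"older's inequality $\int_\Omega u^p|\nabla v|^2\le\|\nabla v\|_{L^4}^2\|u^{p/2}\|_{L^4}^2$, the two-dimensional Gagliardo--Nirenberg (Ladyzhenskaya) inequality $\|u^{p/2}\|_{L^4}^2\le c(\|\nabla u^{p/2}\|_{L^2}\|u^{p/2}\|_{L^1}+\|u^{p/2}\|_{L^1}^2)$ together with $\|u^{p/2}\|_{L^1}=\|u\|_{L^{p/2}}^{p/2}$, Young's inequality to absorb a fraction of $\|\nabla u^{p/2}\|_{L^2}^2$, and once more $\|u\|_{L^p}^p=\|u^{p/2}\|_{L^2}^2\le\varepsilon\|\nabla u^{p/2}\|_{L^2}^2+C_\varepsilon\|u\|_{L^{p/2}}^p$, one is led to an autonomous differential inequality of the form
\[
\frac{d}{dt}\|u(\cdot,t)\|_{L^p}^p+\|u(\cdot,t)\|_{L^p}^p\le C(p)\Big(\sup_{0<s<T_{max}}\|u(\cdot,s)\|_{L^{p/2}}^p+1\Big),
\]
where $C(p)$ depends polynomially on $p$ and on the uniform bound for $\|\nabla v\|_{L^4}$ from Step 1. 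An ODE comparison then gives $\sup_{0<t<T_{max}}\|u(\cdot,t)\|_{L^p}^p\le\|u_0\|_{L^p}^p+C(p)\big(\sup_{0<s<T_{max}}\|u(\cdot,s)\|_{L^{p/2}}^p+1\big)$.

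\textbf{Step 3 (iteration) and the main obstacle.} Taking $p=p_k:=2^k$ and writing $A_k:=\sup_{0<t<T_{max}}\|u(\cdot,t)\|_{L^{p_k}}$, the last estimate reads $A_k^{p_k}\le\|u_0\|_{L^{p_k}}^{p_k}+C(p_k)(A_{k-1}^{p_{k-1}}+1)$. Since $u_0\in W^{1,\infty}(\Omega)$ yields $\|u_0\|_{L^{p_k}}\le\|u_0\|_{L^\infty}|\Omega|^{1/p_k}$ uniformly in $k$, and $A_1=\sup_t\|u(\cdot,t)\|_{L^2}<\infty$ by Lemma \ref{2L2}, a routine bootstrap (take $p_k$-th roots, use $C(p_k)^{1/p_k}\to1$ and $\sum_k p_k^{-1}\ln p_k<\infty$) shows that $\{A_k\}$ stays bounded, whence $\|u(\cdot,t)\|_{L^\infty}=\lim_{k\to\infty}\|u(\cdot,t)\|_{L^{p_k}}\le\limsup_k A_k<\infty$ uniformly in $t$, which is \eqref{LI-u}. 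The only genuine difficulty is the possible degeneracy $e^{-\chi v}\to0$ as $v\to\infty$: all the estimates in Steps 2--3 are standard once the equation is uniformly parabolic, so everything hinges on the uniform $L^\infty$-bound for $v$ in Step 1, which in turn requires the $L^2$-bound for $u$ (the weaker mass bound $\|u\|_{L^1}$ would not suffice, the corresponding kernel singularity $\sigma^{-1}$ being non-integrable in two dimensions). Alternatively one could skip Step 1 and carry the degenerate weight $e^{-\chi v}$ through the whole iteration as in Lemma \ref{2L2}, controlling the quantities $\|e^{-\frac{\chi}{2}v}u^{p/2}\|_{L^2}$ and using the conservation of $\|u\|_{L^1}$; but establishing $v\in L^\infty$ first is cleaner.
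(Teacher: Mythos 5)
Your proposal is correct and follows essentially the same route as the paper: use the $L^2$-bound on $u$ from Lemma \ref{2L2} to obtain a uniform $L^\infty$-bound on $v$ (hence a positive lower bound $e^{-\chi v}\ge\delta_0$ removing the degeneracy), then run $L^p$-testing with Gagliardo--Nirenberg and a Moser--Alikakos iteration, the only cosmetic difference being that the paper first fixes $p=4$ to bound $\|\nabla v\|_{L^\infty}$ and then cites the standard iteration, whereas you carry the recursion $L^{p/2}\to L^p$ explicitly. One small slip: the two-dimensional interpolation with $L^1$ as the low norm reads $\|w\|_{L^4}^2\le c\bigl(\|\nabla w\|_{L^2}^{3/2}\|w\|_{L^1}^{1/2}+\|w\|_{L^1}^2\bigr)$ rather than $\|\nabla w\|_{L^2}\|w\|_{L^1}$, but Young's inequality still absorbs the gradient term, so the argument is unaffected.
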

\begin{proof}
%By the variation-of-constant formula, it follows from the second equation of  \eqref{1-1} that
%\begin{equation}\label{Lv-1}
%v(x,t)=e^{(\Delta -1)t}v_0(x)+\int_0^t e^{(\Delta-1)(t-s)}u(x,s)ds.
%\end{equation}
%Then applying the well-known smoothing estimates for the  heat semigroup $(e^{\Delta t})_{t\geq 0}$ on homogeneous Neumann boundary conditions and using \eqref{2L2-1}, one has from \eqref{Lv-1} that
%\begin{equation}\label{Lv-3}
%\begin{split}
%\|v(\cdot,t)\|_{L^\infty}
%&\leq e^{-t}\|v_0\|_{L^\infty}+c_1\int_0^t(t-s)^{-\frac{1}{2}}e^{-(t-s)}\|u(\cdot,s)\|_{L^2}ds\\
%&\leq \|v_0\|_{L^\infty}+c_1c_2 \int_0^t(t-s)^{-\frac{1}{2}}e^{-(t-s)}ds\leq c_3
%\end{split}
%\end{equation}
%and
%\begin{equation}\label{Lv-4}
%\begin{split}
%\|\nabla v\|_{L^4}
%&\leq \|\nabla v_0\|_{L^4}+c_4 \int_0^t(t-s)^{-\frac{3}{4}}e^{-(t-s)}ds\leq c_5.
%\end{split}
%\end{equation}
{\color{black}Noting \eqref{2L2-1} and applying the parabolic regularity estimates to the second equation of \eqref{1-1}, one can find a positive constant $c_1$ such that
\begin{equation}\label{Lv-4}
\|v\|_{L^\infty}+\|\nabla v\|_{L^4}\leq c_1,
\end{equation}
which gives
\begin{equation}\label{Lv-5}
e^{-\chi v}\geq e^{-\chi c_1}:=d_1>0.
\end{equation}}
Then multiplying the first equation of \eqref{1-1} by $u^{p-1}$ with $p\geq 3$ and integrating the result by parts, we end up with
\begin{equation*}
\begin{split}
&\frac{1}{p}\frac{d}{dt}\int_\Omega u^pdx+(p-1)\int_\Omega e^{-\chi v}u^{p-2}|\nabla u|^2dx\\
&=-(p-1)\chi\int_\Omega e^{-\chi v}u^{p-1}\nabla u\cdot\nabla vdx\\
&\leq \frac{p-1}{2}\int_\Omega e^{-\chi v}u^{p-2}|\nabla u|^2dx+\frac{(p-1)\chi^2}{2}\int_\Omega e^{-\chi v}u^p|\nabla v|^2dx,
\end{split}
\end{equation*}
which, combined with \eqref{Lv-5} and the fact $e^{-\chi v}\leq 1$, gives
\begin{equation}\label{Lv-6}
\frac{d}{dt}\int_\Omega u^p dx+\frac{2(p-1)d_1}{p}\int_\Omega |\nabla u^\frac{p}{2}|^2dx\leq \frac{p(p-1)\chi^2}{2}\int_\Omega u^p|\nabla v|^2dx.
\end{equation}
Then with \eqref{2L2-1} and \eqref{Lv-4}, we can use the H\"{o}lder's inequality and Gagliardo-Nirenberg inequality to get
\begin{equation}\label{Lv-7}
\begin{split}
\frac{p(p-1)\chi^2}{2}\int_\Omega u^p|\nabla v|^2dx
&\leq \frac{p(p-1)\chi^2}{2}\left(\int_\Omega u^{2p}dx\right)^\frac{1}{2}\cdot\left(\int_\Omega |\nabla v|^4dx\right)^\frac{1}{2}\\
&\leq \frac{p(p-1)\chi^2}{2}\|u^\frac{p}{2}\|_{L^4}^2\|\nabla v\|_{L^4}^2\\
&\leq \frac{p(p-1)\chi^2}{2} c_2(\|\nabla u^{\frac{p}{2}}\|_{L^2}^{2(1-\frac{1}{p})}\|u^\frac{p}{2}\|_{L^\frac{4}{p}}^\frac{2}{p}
+\|u^\frac{p}{2}\|_{L^\frac{4}{p}}^2)\\
%&\leq d_2\|\nabla u^{\frac{p}{2}}\|_{L^2}^{2(1-\frac{1}{p})}+d_3\\
&\leq  \frac{(p-1)d_1}{p}\|\nabla u^\frac{p}{2}\|_{L^2}^2+c_3,
%\frac{d_1}{p}\left(\frac{d_2}{d_1}\right)^p+d_3,
\end{split}
\end{equation}
%where
%\begin{equation*}
%d_2:=\frac{p(p-1)\chi^2c_2c_5^2c_6 }{2} \ \ \mathrm{and}\ \ d_3:=\frac{p(p-1)\chi^2c_2^pc_5^2c_6 }{2}.
%\end{equation*}
On the other hand, using the Gagliardo-Nirenberg inequality and \eqref{2L2-1} again, one has
\begin{equation}\label{Lv-7*}
\begin{split}
\int_\Omega u^p dx=\|u^\frac{p}{2}\|_{L^2}^2
&\leq c_4(\|\nabla u^\frac{p}{2}\|_{L^2}^{2(1-\frac{2}{p})}\|u^\frac{p}{2}\|_{L^\frac{4}{p}}^\frac{4}{p}+\|u^\frac{p}{2}\|_{L^\frac{4}{p}}^2)\\
%&\leq c_7c_2^2\|\nabla u^\frac{p}{2}\|_{L^2}^{2(1-\frac{2}{p})}+c_7c_2^p\\
&\leq \frac{(p-1)d_1}{p}\|\nabla u^\frac{p}{2}\|_{L^2}^2+c_5.
\end{split}
\end{equation}
%and
%\begin{equation*}
%d_3:=\frac{p(p-1)\chi^2c_2^pc_5^2c_6 }{2} .
%\end{equation*}
%Then using the Young's inequality, one can derive that
%\begin{equation}
%d_2(T)\|\nabla u^{\frac{p}{2}}\|_{L^2}^{2(1-\frac{1}{p})}\leq \frac{(p-1)d_1(T)}{p}\|\nabla u^\frac{p}{2}\|_{L^2}^2+\frac{2d_1(T)}{p}\left(\frac{d_2(T)}{d_1(T)}\right)^p
%\end{equation}
%Using the Gagliardo-Nirenberg inequality and Young's inequality again, and noting that $\|u^\frac{p}{2}\|_{L^\frac{2}{p}}=\|u\|_{L^1}^frac{p}{2}=M^\frac{p}{2}$,  we obtain
%\begin{equation}\label{Lv-8}
%\begin{split}
%\int_\Omega u^p dx=\|u^\frac{p}{2}\|_{L^2}^2
%&\leq c_8(\|\nabla u^\frac{p}{2}\|_{L^2}^{2(1-\frac{1}{p})}\|u^\frac{p}{2}\|_{L^\frac{2}{p}}^\frac{2}{p}
%+\|u^\frac{p}{2}\|_{L^\frac{2}{p}}^2)\\
%&\leq c_8M \|\nabla u^\frac{p}{2}\|_{L^2}^{2(1-\frac{1}{p})}+c_8M^p\\
%&\leq \frac{(p-1)d_1(T)}{p}\|\nabla u^\frac{p}{2}\|_{L^2}^2+\frac{d_1(T)}{p}\left(\frac{c_8M}{d_1(T)}\right)^p+c_8M^p.
%\end{split}
%\end{equation}
Then substituting \eqref{Lv-7} and \eqref{Lv-7*} into \eqref{Lv-6}, and integrating the result with respect to $t$, we have for all {\color{black}$t\in(0,T_{max})$} that
\begin{equation}\label{Lv-8}
\|u(\cdot,t)\|_{L^p}^p\leq \|u_0\|_{L^p}^p+c_6,
\end{equation}
where $c_6>0$ is constant depend on $p$ but independent of $t$.
Applying the parabolic regularity theory to the second equation of \eqref{1-1}, and choosing $p=4$ in \eqref{Lv-8}, one can find a positive constant $d_2$ independent of $p$ such that $\|\nabla v(\cdot,t)\|_{L^\infty}\leq d_2$. Then by the well-known Moser iteration \cite{Alikakos-1979}(or see \cite{JKW-SIAP-2018}), we can show \eqref{LI-u}.
\end{proof}

\begin{lemma}\label{BS-1}
Let $\Omega\subset \R^2 $ be a  bounded domain with smooth boundary. Assume that $0\leq(u_0,v_0)\in [W^{1,\infty}(\Omega)]^2$ and then if $\int_{\Omega}u_0dx<\frac{4\pi}{\chi}$, the system (\ref{1-1}) admits a unique classical solution $(u,v)\in [C^0(\bar{\Omega}\times[0,\infty))\cap C^{2,1}(\bar{\Omega}\times(0,\infty))]^2$ with uniform-in-time bound.
\end{lemma}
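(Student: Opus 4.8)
The plan is to package all the a priori estimates obtained in Lemmas \ref{ln-e}--\ref{boundedness-1} together with the local existence theory of Lemma \ref{LS} into a global existence statement. The starting point is Lemma \ref{LS}, which gives a unique classical solution $(u,v)$ on a maximal interval $(0,T_{max})$ together with the blow-up alternative: if $T_{max}<\infty$, then $\|u(\cdot,t)\|_{L^\infty}+\|v(\cdot,t)\|_{W^{1,\infty}}\to\infty$ as $t\nearrow T_{max}$. Under the subcritical mass assumption $\int_\Omega u_0\,dx<\frac{4\pi}{\chi}$, I would argue by contradiction: suppose $T_{max}<\infty$.

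First I would invoke Lemma \ref{boundedness-1}, which yields a constant $C$ independent of $t$ with $\|u(\cdot,t)\|_{L^\infty}\leq C$ for all $t\in(0,T_{max})$. Then, from the proof of Lemma \ref{boundedness-1} (in particular \eqref{Lv-8} with $p=4$ and the subsequent parabolic regularity step), one already has $\|\nabla v(\cdot,t)\|_{L^\infty}\leq d_2$; combined with \eqref{Lv-4} (or \eqref{L1-v} together with parabolic smoothing applied to the $v$-equation $v_t=\Delta v+u-v$ with $u$ now bounded in $L^\infty$), this gives $\|v(\cdot,t)\|_{W^{1,\infty}}\leq C$ uniformly in $t\in(0,T_{max})$. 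Consequently $\|u(\cdot,t)\|_{L^\infty}+\|v(\cdot,t)\|_{W^{1,\infty}}$ stays bounded as $t\nearrow T_{max}$, contradicting the blow-up alternative. Hence $T_{max}=\infty$, and all the above bounds hold for every $t>0$, which is precisely the uniform-in-time bound asserted. Uniqueness and the regularity class $[C^0(\bar\Omega\times[0,\infty))\cap C^{2,1}(\bar\Omega\times(0,\infty))]^2$ are inherited directly from Lemma \ref{LS}.

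The one point requiring a little care — and the place where I expect the main (though modest) obstacle — is the degeneracy of the diffusion coefficient $e^{-\chi v}$: a priori $v$ could grow in time, driving $e^{-\chi v}$ toward $0$ and destroying the parabolicity needed for the Moser iteration. This is exactly what Lemmas \ref{ln-e}--\ref{2L2} are designed to circumvent: the subcritical-mass Lyapunov estimate \eqref{ln}--\eqref{ln-1}, the space-time bound \eqref{Lr2} on $\int e^{-\chi v}u^2$, the integral bound \eqref{L2i} on $\|v\|_{L^\infty}$, and finally the uniform $L^2$ bound \eqref{2L2-1} on $u$ together jointly force $\|v\|_{L^\infty}\leq c_1$ uniformly (estimate \eqref{Lv-4}), whence the non-degeneracy bound $e^{-\chi v}\geq d_1>0$ in \eqref{Lv-5}. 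Once non-degeneracy is secured, the equation is uniformly parabolic and the standard $L^p$-testing plus Moser iteration argument in Lemma \ref{boundedness-1} applies verbatim. So the proof of Lemma \ref{BS-1} itself is essentially a bookkeeping assembly: state the contradiction hypothesis $T_{max}<\infty$, cite Lemma \ref{boundedness-1} and the $W^{1,\infty}$-bound on $v$, and conclude via the blow-up alternative of Lemma \ref{LS}.
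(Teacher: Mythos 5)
Your argument is correct and follows essentially the same route as the paper: invoke Lemma \ref{boundedness-1} for the uniform $L^\infty$ bound on $u$, obtain the $W^{1,\infty}$ bound on $v$ from parabolic regularity applied to the second equation, and conclude via the extensibility criterion of Lemma \ref{LS}. The paper states this directly rather than as an explicit contradiction with $T_{max}<\infty$, but the content is identical.
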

\begin{proof}Using Lemma \ref{boundedness-1}, we have
$
\|u(\cdot,t)\|_{L^\infty}\leq C_1.
$
Then applying the parabolic regularity to the second equation of  \eqref{1-1}, one has
$\|v(\cdot,t)\|_{W^{1,\infty}}\leq C_2$. Hence Lemma \ref{BS-1} follows directly by using Lemma \ref{LS}.
\end{proof}

\subsection{Blowup for supercritical mass}
In this subsection, we shall construct some initial data {\color{black}with supercritical mass (i.e., $\int_\Omega u_0 dx>\frac{4\pi}{\chi}$)} such that the corresponding solution of  \eqref{1-1} blows up based on some ideas in \cite{horstmann2001blow,JW-JDE-2016}.
%The proof is  based on some ideas in \cite{horstmann2001blow,JW-JDE-2016} by using the Lyapunov functional $F(u,v)$ defined in \eqref{3-2}.
%Precisely, the proof consists of the following steps:
%\begin{itemize}
% \item[Step 1:] If $\int_\Omega u_0(x)dx\neq \frac{4\pi m}{\chi}$  for any $m\in\mathbb{N^+}$, then we find a constant $K>0$ such that $F(u_\infty,v_\infty)\geq-K$, where $(u_\infty,v_\infty)$ is  the steady state of  \eqref{1-1}.\\
%\item[Step 2:] If $\int_\Omega u_0(x)dx> \frac{4\pi }{\chi}$, we  construct some initial data $(u_0,v_0)\in [W^{1,\infty}(\Omega)]^2$ such that $F(u_0,v_0)<-K$.\\
%\item[Step 3:] By supposing $(u,v)$ is a global and bounded solution of  \eqref{1-1},  we can find a time sequence $t_k\to\infty$ such that $(u(\cdot,t_k),v(\cdot,t_k)) \to (u_\infty,v_\infty)$ satisfying
%$F(u_\infty,v_\infty)\leq F(u_0,v_0)$
%which raises the following contradiction:
%$
%   -K\leq F(u_\infty,v_\infty)\leq F(u_0,v_0)<-K.
%$
%\end{itemize}
 Noting $M_0=\int_\Omega u_0 dx$, then the stationary solution of system \eqref{1-1} satisfies the following problem
\begin{equation}\label{ST-1}
\begin{cases}
-\Delta v+ v=\frac{M_0 e^{\chi v}}{\int_\Omega e^{\chi v}dx }, \ \ &x\in\Omega,\\
u=\frac{M_0 e^{\chi v}}{\int_\Omega e^{\chi v}dx},\ \ &x\in\Omega,\\
\frac{\partial v}{\partial \nu}=0, &x\in\partial\Omega,\\
\int_\Omega v dx=\int_\Omega udx=M_0.
\end{cases}
\end{equation}
%$\lambda>0$ is a constant satisfying
%\begin{equation}\label{5-6}
%\lambda=\frac{\int_\Omega udx}{\int_\Omega e^{\chi v}dx}=\frac{M}{\int_\Omega e^{\chi v}dx}.
%\end{equation}
%with $u=\frac{M_0 e^{\chi v}}{\int_\Omega e^{\chi v}dx}$.
For convenience, we introduce the following change of variable:
$
V=v-\frac{1}{|\Omega|}\int_\Omega v dx=v-\frac{M_0}{|\Omega|}.
$
Then the system \eqref{ST-1} can be rewritten as
\begin{equation}\label{s-1}
\begin{cases}
-\Delta V+ V=\frac{M_0 e^{\chi V}}{\int_\Omega e^{\chi V}dx }-\frac{ M_0}{|\Omega|},&x\in\Omega,\\
U=\frac{M_0 e^{\chi V}}{\int_\Omega e^{\chi V}dx }, &x\in\Omega,\\
\frac{\partial V}{\partial \nu}=0,&x\in \partial\Omega, \\
\int_\Omega Vdx=0,\ \ \int_\Omega U dx=M_0.
\end{cases}
\end{equation}
We  point out that the steady state problem \eqref{s-1} and the Lyapunov function \eqref{3-2} for \eqref{1-1}  are the same as those for the minimal Keller-Segel system \eqref{CKS} whose blow-up of solutions has been studied in \cite{horstmann2001blow,horstmann2001nonsymmetric}. Hence we  use the same arguments  as in \cite[Lemma 3.5]{horstmann2001blow} to establish the lower bound for the steady-state energy
  when $\int_\Omega u_0dx\neq \frac{4\pi m}{\chi}$  for any $m\in\mathbb{N^+}$. For convenience, we cite the results without proof.
\begin{lemma}\label{S-B}
Suppose $M_0\neq \frac{4\pi m}{\chi}$ for all $m\in\mathbb{N^+}$. Then there exists a constant $K> 0$ such that
  \begin{equation}\label{s*}
F(U,V)\geq- K
  \end{equation}
holds for any solution  $(U,V)$ of the system $\eqref{s-1}$.
\end{lemma}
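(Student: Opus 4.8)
\emph{Proof proposal.} The plan is to first make $F$ explicit along the steady states. From the relation $U=\dfrac{M_0e^{\chi V}}{\int_\Omega e^{\chi V}dx}$ one has $\ln U=\ln M_0+\chi V-\ln\int_\Omega e^{\chi V}dx$, hence, since $\int_\Omega U\,dx=M_0$,
\begin{equation*}
\int_\Omega U\ln U\,dx=M_0\ln M_0+\chi\int_\Omega UV\,dx-M_0\ln\int_\Omega e^{\chi V}dx .
\end{equation*}
Substituting this into \eqref{3-2} (shifting $v$ by its constant mean changes $F$ only by an additive constant depending on $M_0,\chi,|\Omega|$, which is harmless), the two copies of $\chi\int_\Omega UV\,dx$ cancel and we obtain
\begin{equation*}
F(U,V)=\frac{\chi}{2}\int_\Omega\bigl(V^2+|\nabla V|^2\bigr)dx-M_0\ln\int_\Omega e^{\chi V}dx+M_0\ln M_0=:\Phi(V)+M_0\ln M_0 .
\end{equation*}
Thus it suffices to bound the Moser--Trudinger--type functional $\Phi$ from below along the solutions $V$ of \eqref{s-1}.

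If $M_0<\frac{4\pi}{\chi}$, this is elementary and in fact $\Phi$ is bounded below on the whole subspace $\{V\in H^1(\Omega):\int_\Omega V\,dx=0\}$: the sharp two-dimensional Trudinger--Moser inequality (cf. the proof of Lemma \ref{ln-e}) gives $M_0\ln\int_\Omega e^{\chi V}dx\le\frac{\chi^2M_0}{8\pi}\|\nabla V\|_{L^2}^2+C$, so that
\begin{equation*}
\Phi(V)\ge\frac{\chi}{2}\Bigl(1-\frac{\chi M_0}{4\pi}\Bigr)\|\nabla V\|_{L^2}^2+\frac{\chi}{2}\|V\|_{L^2}^2-C\ge-C .
\end{equation*}
For $M_0>\frac{4\pi}{\chi}$ the coefficient is negative, $\Phi$ is no longer bounded below on that subspace, and the argument must genuinely exploit that $V$ solves the elliptic equation in \eqref{s-1}.

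In the supercritical range I would argue by contradiction: assume there is a sequence $(U_k,V_k)$ of solutions of \eqref{s-1} with $\Phi(V_k)\to-\infty$, so that necessarily $\|V_k\|_{H^1}\to\infty$ and $\int_\Omega e^{\chi V_k}dx\to\infty$. Each $V_k$ solves the Neumann problem $-\Delta V_k+V_k=U_k-\frac{M_0}{|\Omega|}$ with $U_k=\frac{M_0e^{\chi V_k}}{\int_\Omega e^{\chi V_k}dx}\ge0$ of fixed total mass $\int_\Omega U_k\,dx=M_0$; setting $w_k=\chi V_k$ this is a mean-field equation with parameter $\lambda:=\chi M_0$ plus the lower-order terms $w_k$ and $-\lambda/|\Omega|$, which do not affect the asymptotic analysis. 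Invoking the Brezis--Merle/Li--Shafrir concentration--compactness dichotomy together with the mass-quantization theorem adapted to Neumann boundary conditions, along a subsequence either $\{V_k\}$ is bounded in $C^{2}(\bar{\Omega})$ --- impossible, since then $\Phi(V_k)$ would remain bounded --- or $U_k$ concentrates at a finite nonempty blow-up set, with concentrated mass $8\pi$ at each interior point, $4\pi$ at each boundary point, and no residual mass left over. The latter forces $\lambda=\chi M_0\in 4\pi\mathbb{N}^+$, i.e. $M_0\in\{\tfrac{4\pi m}{\chi}:m\in\mathbb{N}^+\}$, contradicting the hypothesis. Hence $\Phi(V_k)\not\to-\infty$, and $F(U,V)=\Phi(V)+M_0\ln M_0\ge-K$ for some $K>0$ depending only on $M_0,\chi,\Omega$.

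The step I expect to be the main obstacle is the last one: the blow-up analysis and, above all, the precise mass quantization for the mean-field equation on a domain with \emph{Neumann} boundary conditions --- the $4\pi$ contribution at boundary concentration points and the exclusion of a residual mass. This is exactly \cite[Lemma 3.5]{horstmann2001blow} (see also \cite{horstmann2001nonsymmetric}), which is why here we merely cite the statement rather than reconstruct the argument.
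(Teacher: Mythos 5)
Your proposal is correct and matches the paper's treatment: the paper does not prove this lemma either, but simply invokes \cite[Lemma 3.5]{horstmann2001blow} after observing that the steady-state problem \eqref{s-1} and the functional \eqref{3-2} coincide with those of the minimal Keller--Segel system, which is exactly the reduction your explicit computation of $F(U,V)$ carries out. Your additional sketch of the blow-up/quantization argument inside the cited lemma is a faithful outline of what that reference actually does, so nothing is missing relative to the paper.
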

Next, we show that there exist  some initial data with supercritical mass (i.e., $M_0>\frac{4\pi}{\chi}$) such that the energy is below any prescribed bound. To this end, we first prove that there is a sequence $(U_\varepsilon,V_\varepsilon)_{\varepsilon> 0}$ satisfying $\int_\Omega V_\varepsilon (x)dx=0$ and $\int_\Omega U_\varepsilon(x) dx=M_0$ such that $\lim\limits_{\varepsilon\to 0}F(U_\varepsilon,V_\varepsilon)=-\infty$ if $M_0>\frac{4\pi}{\chi}$.
 %From \cite[p. 615]{chen1991classification}, we know that the functions
%\begin{equation*}
%\psi_\varepsilon(x)=\ln\left(\frac{8\varepsilon^2}{(\varepsilon^2+\pi|x-x_0|^2)^2}\right),\ \varepsilon>0,  x_0\in\R^2
%\end{equation*}
%are solutions of
%$
%-\Delta \psi(x)=e^{\psi(x)},~~x\in\R^2$ satisfying $\int_{\R^2}e^{\psi(x)}dx <\infty.$
%We note that as $\varepsilon\to 0$, $\psi_\varepsilon(x)\to-\infty$ for all $x\neq x_0$ and $\psi_\varepsilon(x_0)\to\infty$.
Let $(U_\varepsilon,V_\varepsilon)$ be defined as follows:
\begin{equation}\label{2-4-12}
\begin{split}
V_\varepsilon(x)
&=\frac{1}{\chi}\left[\ln\left(\frac{\varepsilon^2}{(\varepsilon^2+\pi|x-x_0|^2)^2}\right)-\frac{1}{|\Omega|}\int_\Omega\ln\left(\frac{\varepsilon^2}{(\varepsilon^2+\pi|x-x_0|^2)^2}\right)dx\right],
\end{split}
\end{equation}
and
\begin{equation}\label{2-4-12*}
U_\varepsilon(x)=\frac{M_0e^{\chi V_\varepsilon(x)}}{\int_\Omega e^{\chi V_\varepsilon(x)}dx},
\end{equation}
where $x_0$ is an arbitrary point on $\partial\Omega$.
One can easily check that $\int_\Omega V_\varepsilon (x)dx=0$ and $\int_\Omega U_\varepsilon(x) dx=M_0$. Next, we shall show that $\lim\limits_{\varepsilon\to 0}F(U_\varepsilon,V_\varepsilon)=-\infty$ if $M_0>\frac{4\pi}{\chi}$.
\begin{lemma}\label{K-B}
Let $(U_\varepsilon, V_\varepsilon)_{\varepsilon> 0}$ be defined by $\eqref{2-4-12}- \eqref{2-4-12*}$ and $x_0\in\partial\Omega$. If $M_0>\frac{4\pi}{\chi}$, then it holds that
\begin{equation}\label{5-2-3}
F(U_\varepsilon,V_\varepsilon)\to-\infty \ \ \mathrm{as}\ \ \varepsilon\to 0.
\end{equation}
\end{lemma}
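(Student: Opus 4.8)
The plan is to substitute the concentrating family \eqref{2-4-12} into the Lyapunov functional \eqref{3-2} and read off its leading order as $\varepsilon\to0$, which is logarithmic in $\varepsilon$ with a coefficient that changes sign precisely at $M_0=4\pi/\chi$. Throughout write $w_\varepsilon(x)=\dfrac{\varepsilon^2}{(\varepsilon^2+\pi|x-x_0|^2)^2}$, so that $V_\varepsilon=\frac1\chi\bigl(\ln w_\varepsilon-\overline{\ln w_\varepsilon}\bigr)$ with $\overline{\ln w_\varepsilon}:=\frac1{|\Omega|}\int_\Omega\ln w_\varepsilon\,dx$. The first step is to note that $e^{\chi V_\varepsilon}=w_\varepsilon\,e^{-\overline{\ln w_\varepsilon}}$, so the normalizing exponential cancels in the definition of $U_\varepsilon$, giving the clean form
\[
U_\varepsilon=\frac{M_0\,w_\varepsilon}{b_\varepsilon},\qquad b_\varepsilon:=\int_\Omega w_\varepsilon\,dx .
\]
Two elementary facts about $w_\varepsilon$ drive the estimate: $\int_{\R^2}w_\varepsilon\,dy=1$, so $0<b_\varepsilon\le1$; and since $\partial\Omega$ is smooth with $x_0\in\partial\Omega$, the mass of $w_\varepsilon$ concentrates at $x_0$ while near $x_0$ the domain is, up to a boundary correction whose width is $O(|x-x_0|^2)$, a half–plane, so $b_\varepsilon$ stays bounded away from $0$ (in fact $b_\varepsilon\to\tfrac12$); in particular $\ln(M_0/b_\varepsilon)=O(1)$. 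I will also use $\nabla V_\varepsilon=\tfrac1\chi\nabla\ln w_\varepsilon=-\dfrac{4\pi(x-x_0)}{\chi(\varepsilon^2+\pi|x-x_0|^2)}$.

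Next I would compute the three blocks of $F(U_\varepsilon,V_\varepsilon)$ separately. (a) The entropy and coupling terms combine, after inserting $U_\varepsilon=M_0w_\varepsilon/b_\varepsilon$ and $V_\varepsilon=\frac1\chi(\ln w_\varepsilon-\overline{\ln w_\varepsilon})$, into the exact identity (the $\int_\Omega w_\varepsilon\ln w_\varepsilon$ terms cancel)
\[
\int_\Omega U_\varepsilon\ln U_\varepsilon\,dx-\chi\int_\Omega U_\varepsilon V_\varepsilon\,dx
=M_0\ln\frac{M_0}{b_\varepsilon}+M_0\,\overline{\ln w_\varepsilon};
\]
since $\overline{\ln w_\varepsilon}=\ln\varepsilon^2-\frac2{|\Omega|}\int_\Omega\ln(\varepsilon^2+\pi|x-x_0|^2)\,dx$ and the last integral stays bounded as $\varepsilon\to0$ (its integrand is, for small $\varepsilon$, dominated by the fixed $L^1(\Omega)$ function $|\ln(\pi|x-x_0|^2)|+C$), this block equals $2M_0\ln\varepsilon+O(1)$. (b) For the zeroth–order term, $V_\varepsilon$ differs from $-\frac2\chi\ln(\varepsilon^2+\pi|x-x_0|^2)$ by a constant bounded in $\varepsilon$, and $\ln(\varepsilon^2+\pi|x-x_0|^2)$ is, for small $\varepsilon$, dominated by a fixed $L^2(\Omega)$ function, so $\frac\chi2\int_\Omega V_\varepsilon^2\,dx=O(1)$. (c) For the Dirichlet term,
\[
\frac\chi2\int_\Omega|\nabla V_\varepsilon|^2\,dx=\frac{8\pi^2}{\chi}\int_\Omega\frac{|x-x_0|^2}{(\varepsilon^2+\pi|x-x_0|^2)^2}\,dx;
\]
I would split $\Omega=\bigl(\Omega\cap B_\delta(x_0)\bigr)\cup\bigl(\Omega\setminus B_\delta(x_0)\bigr)$, note that the second piece contributes $O(1)$, observe that $\Omega\cap B_\delta(x_0)$ differs from a half–ball by a region whose width near $x_0$ is $O(|x-x_0|^2)$ and whose contribution to the integral is $O(1)$, and then evaluate the half–ball integral in polar coordinates (substitution $s=\pi|x-x_0|^2$, using $\int_0^L\frac{s\,ds}{(\varepsilon^2+s)^2}=-2\ln\varepsilon+O(1)$) to obtain $\int_\Omega\frac{|x-x_0|^2}{(\varepsilon^2+\pi|x-x_0|^2)^2}\,dx=-\frac1\pi\ln\varepsilon+O(1)$; hence $\frac\chi2\int_\Omega|\nabla V_\varepsilon|^2\,dx=-\frac{8\pi}{\chi}\ln\varepsilon+O(1)$.

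Adding (a)–(c) yields
\[
F(U_\varepsilon,V_\varepsilon)=\Bigl(2M_0-\frac{8\pi}{\chi}\Bigr)\ln\varepsilon+O(1)\qquad\text{as }\varepsilon\to0 ,
\]
so the hypothesis $M_0>\frac{4\pi}{\chi}$ makes the bracket strictly positive, and since $\ln\varepsilon\to-\infty$ we conclude $F(U_\varepsilon,V_\varepsilon)\to-\infty$, which is \eqref{5-2-3}. I expect the main obstacle to be step (c): the factor $\tfrac12$ produced by $x_0\in\partial\Omega$ is indispensable, because estimating the Dirichlet integral over the full ball $B_\delta(x_0)$ would only give the bracket $2M_0-\frac{16\pi}{\chi}$ and hence the strictly weaker threshold $M_0>\frac{8\pi}{\chi}$. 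Making the half–ball asymptotics and the various $O(1)$ remainder bounds rigorous — conveniently by flattening $\partial\Omega$ near $x_0$ with a $C^2$ diffeomorphism and tracking the Jacobian — is the real work; the rest is routine once the bubble has been inserted into the functional.
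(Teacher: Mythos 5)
Your proposal is correct and follows essentially the same route as the paper: the identity $\int_\Omega U_\varepsilon\ln U_\varepsilon\,dx-\chi\int_\Omega U_\varepsilon V_\varepsilon\,dx=M_0\ln M_0-M_0\ln\bigl(\int_\Omega e^{\chi V_\varepsilon}dx\bigr)$ is exactly your block (a) in disguise, the half-ball polar-coordinate evaluation giving $\frac{\chi}{2}\int_\Omega|\nabla V_\varepsilon|^2dx\le\frac{8\pi}{\chi}\ln\frac1\varepsilon+O(1)$ is the paper's \eqref{2-4-15}, and the remaining $O(1)$ bounds match \eqref{2-4-18} and \eqref{ve}. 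Your explicit remark that the boundary location of $x_0$ is what halves the Dirichlet coefficient, and your bound on the discrepancy between $\Omega\cap B_\delta(x_0)$ and a half-ball, correctly identify the one point the paper passes over lightly.
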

\begin{proof}
Since $x_0$ is an arbitrary point on $\partial\Omega$, we assume $x_0=0$ without loss of generality. With the definition of $F(u,v)$ and \eqref{2-4-12*}, one has
\begin{equation}\label{5-2-4}
\begin{split}
F(U_\varepsilon,V_\varepsilon)
&=\int_{\Omega}U_\varepsilon\ln U_\varepsilon dx-\chi \int_{\Omega}U_\varepsilon V_\varepsilon dx+\frac{\chi }{2}\int_\Omega \abs{\nabla V_\varepsilon}^2dx+\frac{\chi}{2}\int_\Omega V_\varepsilon^2dx\\
&=M_0\ln M_0-M_0\ln\left(\int_\Omega  e^{\chi V_\varepsilon}dx\right)+\frac{\chi }{2}\int_\Omega \abs{\nabla V_\varepsilon}^2dx+\frac{\chi}{2}\int_\Omega V_\varepsilon^2dx,
\end{split}
\end{equation}
where we have used the fact
\begin{equation*}\label{5-2-5}
\begin{split}
&\int_{\Omega}U_\varepsilon\ln U_\varepsilon dx-\chi\int_{\Omega}U_\varepsilon V_\varepsilon dx\\
&=\frac{M_0}{\int_\Omega e^{\chi V_\varepsilon}dx}\int_\Omega e^{\chi V_\varepsilon}\left[\ln M_0+\chi V_\varepsilon-\ln\left(\int_\Omega e^{\chi V_\varepsilon}dx\right)\right]dx-\frac{\chi M_0}{\int_\Omega e^{\chi V_\varepsilon}dx}\int_\Omega e^{\chi V_\varepsilon}V_\varepsilon dx\\
&=M_0\ln M_0-M_0\ln\left(\int_\Omega  e^{\chi V_\varepsilon}dx\right).
\end{split}
\end{equation*}
On the other hand, we use $\eqref{2-4-12}$ and the polar coordinates around origin
$0\in\partial \Omega$, with $R$ denoting  the maximum distance between the pole and boundary of $\Omega$, to derive that
\begin{equation}\label{2-4-15}
\begin{split}
\frac{\chi }{2}\int_\Omega \abs{\nabla V_\varepsilon}^2dx
&\leq \frac{8\pi^2}{\chi}\int_0^{\pi}\int_0^{\frac{R}{\varepsilon}}\frac{r^3}{(1+\pi r^2)^2}drd\theta\\
&\leq\frac{4\pi}{\chi}\left(\ln \frac{1}{\varepsilon^2}+\ln (\varepsilon^2+\pi R^2)-1+\frac{\varepsilon^2}{\varepsilon^2+\pi R^2}\right)\\
&\leq\frac{8\pi}{\chi}\ln\frac{1}{\varepsilon}+O_1(1),
\end{split}
\end{equation}
where $|O_1(1)|\leq C$ as $\varepsilon\to0$. Moreover, direct calculations give
\begin{equation}\label{2-4-18}
\begin{split}
\frac{\chi}{2}\int_\Omega V_\varepsilon^2dx
&=\frac{1}{2\chi}\int_\Omega (\ln(\varepsilon^2+\pi|x|^2)^2)^2dx-\frac{1}{2\chi |\Omega|}\left(\int_\Omega \ln(\varepsilon^2+\pi|x|^2)^2dx\right)^2=O_2(1),
\end{split}
\end{equation}
where $|O_2(1)|\leq C$ as $\varepsilon\to0$. Furthermore, it has that
%\begin{equation*}
%\begin{split}
%\int_\Omega e^{\chi v_\varepsilon}dx
%=|\Omega| e^{-\frac{1}{|\Omega|}
%\int_\Omega\ln\left(\frac{\varepsilon^2}{(\varepsilon^2+\pi|x|^2)^2}\right)dx}\int_\Omega\left(\frac{\varepsilon^2}{(\varepsilon^2+\pi|x|^2)^2}\right)dx
%\end{split}
%\end{equation*}
%and
\begin{equation}\label{ve*}
\begin{split}
\ln\left(\int_\Omega  e^{\chi V_\varepsilon}dx\right)=\ln\left(|\Omega|\int_\Omega\frac{\varepsilon^2}{(\varepsilon^2+\pi|x|^2)^2}dx\right)
-\frac{1}{|\Omega|}\int_\Omega\ln\left(\frac{\varepsilon^2}{(\varepsilon^2+\pi|x|^2)^2}\right)dx,
\end{split}
\end{equation}
and
$$1-\frac{\varepsilon^2}{\pi R_1^2+\varepsilon^2}\leq \int_\Omega\frac{\varepsilon^2}{(\varepsilon^2+\pi|x|^2)^2}dx\leq 1-\frac{\varepsilon^2}{\pi R_2^2+\varepsilon^2},$$
  where $R_1$ and $R_2$ denote the maximum and minimum distance between the pole and the boundary of  $\Omega$. Then from \eqref{ve*}, one can show that
\begin{equation}\label{ve}
\begin{split}
&-M_0\ln \left(\int_\Omega e^{\chi V_\varepsilon}dx\right)\\
&=-M_0\left[\ln\left(|\Omega|\int_\Omega\frac{\varepsilon^2}{(\varepsilon^2+\pi|x|^2)^2}dx\right)
-\frac{1}{|\Omega|}\int_\Omega\ln\left(\frac{\varepsilon^2}{(\varepsilon^2+\pi|x|^2)^2}\right)dx\right]\\
&= \frac{M_0}{|\Omega|}\int_\Omega \ln \varepsilon^2dx+\frac{M_0}{|\Omega|}\int_\Omega \ln(\varepsilon^2+\pi|x|^2)^2dx-M_0\ln\left(|\Omega|\int_\Omega\frac{\varepsilon^2}{(\varepsilon^2+\pi|x|^2)^2}dx\right)\\
&=2M_0\ln\varepsilon+O_3(1),\\
\end{split}
\end{equation}
with $|O_3(1)|\leq C $ as $\varepsilon\to0$. Finally substituting \eqref{2-4-15}, \eqref{2-4-18} and \eqref{ve} into \eqref{5-2-4} gives
\begin{equation}\label{2-4-20}
F(U_\varepsilon,V_\varepsilon)\leq2\left(\frac{4\pi}{\chi}- M_0\right)\ln\frac{1}{ \varepsilon} +O(1),
\end{equation}
where $O(1)=O_1(1)+O_2(1)+O_3(1)$ and $|O(1)|\leq C$ as $\varepsilon \to 0$. Since $M_0>\frac{4\pi}{\chi}$, \eqref{5-2-3} follows directly from \eqref{2-4-20}.
\end{proof}
 Next, we shall establish the connection between the energy of steady states and the initial data. More precisely, we have the following results.
\begin{lemma}\label{S-P}
Let  $(u,v)$ be a  global-in-time bounded solution of $\eqref{1-1}$. Then  there exist a sequence of times $t_k\to\infty$ and nonnegative function $(U_\infty,V_\infty)\in [C^2(\bar{\Omega})]^2$ such that $(u(\cdot,t_k),v(\cdot,t_k)) \to (U_\infty,V_\infty)$ in $[C^2(\bar{\Omega})]^2$. Furthermore,  $(U_\infty,V_\infty)$ is a solution  of (\ref{s-1}) satisfying
\begin{equation}\label{s-2}
F(U_\infty,V_\infty)\leq F(u_0,v_0).
\end{equation}
\end{lemma}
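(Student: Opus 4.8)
The plan is to use the Lyapunov functional together with the global bound on $(u,v)$ to extract a convergent subsequence whose limit is a steady state, and then pass the energy inequality to the limit. First I would recall that for a global bounded solution Lemma~\ref{boundedness-1} and the parabolic regularity applied to the second equation give uniform-in-time bounds on $\|u(\cdot,t)\|_{L^\infty}$ and $\|v(\cdot,t)\|_{W^{1,\infty}}$; feeding this back into the equations and invoking parabolic Schauder (H\"older) estimates of Ladyzhenskaya--Solonnikov--Ural'ceva type on time cylinders $\Omega\times(t,t+1)$ yields uniform bounds on $(u,v)$ in $C^{2+\beta,1+\beta/2}(\bar\Omega\times[t,t+1])$ for some $\beta\in(0,1)$. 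In particular the orbit $\{(u(\cdot,t),v(\cdot,t)):t\ge 1\}$ is relatively compact in $[C^2(\bar\Omega)]^2$ by Arzel\`a--Ascoli.

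Next I would exploit the dissipation identity \eqref{3-3}: integrating from $0$ to $\infty$ and using $F(u,v)\ge -c_3$ (the lower bound established in Lemma~\ref{ln-e}) shows $\int_0^\infty E(u,v)\,dt<\infty$, hence in particular $\int_0^\infty\|v_t(\cdot,s)\|_{L^2}^2\,ds<\infty$ and $\int_0^\infty\int_\Omega e^{-\chi v}u|\nabla(\ln u-\chi v)|^2\,dx\,ds<\infty$. Since $F(u(\cdot,t),v(\cdot,t))$ is nonincreasing and bounded below, it converges to some limit $F_\infty\le F(u_0,v_0)$ as $t\to\infty$. Now choose $t_k\to\infty$ with $E(u(\cdot,t_k),v(\cdot,t_k))\to 0$ — possible because $\int_0^\infty E\,dt<\infty$ forces $\liminf_{t\to\infty}E(u,v)=0$, and one can arrange the $t_k$ to avoid a small exceptional set so that also, say, $\int_{t_k}^{t_k+1}\|v_t\|_{L^2}^2\,ds\to 0$. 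By compactness, passing to a further subsequence, $(u(\cdot,t_k),v(\cdot,t_k))\to(U_\infty,V_\infty)$ in $[C^2(\bar\Omega)]^2$ with $U_\infty,V_\infty\ge 0$.

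It remains to identify $(U_\infty,V_\infty)$ as a solution of \eqref{s-1} and to obtain \eqref{s-2}. For the steady-state identification, the natural route is to use the smallness of $\int_{t_k}^{t_k+1}\|v_t\|_{L^2}^2\,ds$ together with the second equation to conclude $\Delta V_\infty + U_\infty - V_\infty = 0$ in the limit, while the vanishing of $\int_{t_k}^{t_k+1}\int_\Omega e^{-\chi v}u|\nabla(\ln u-\chi v)|^2\,dx\,ds$ combined with the positivity and $C^2$ convergence forces $\nabla(\ln U_\infty - \chi V_\infty)\equiv 0$, i.e. $U_\infty = \lambda e^{\chi V_\infty}$ for a constant $\lambda>0$; the mass constraint $\int_\Omega U_\infty = M_0$ (from \eqref{L1-u} and the convergence) then fixes $\lambda = M_0/\int_\Omega e^{\chi V_\infty}dx$, and substituting into the elliptic equation gives exactly the first line of \eqref{s-1}, with $\int_\Omega V_\infty = M_0$ coming from \eqref{L1-v} and the $v$-equation. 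Finally, since $F$ is continuous with respect to $C^2$-convergence, $F(U_\infty,V_\infty)=\lim_k F(u(\cdot,t_k),v(\cdot,t_k)) = F_\infty \le F(u_0,v_0)$, which is \eqref{s-2}.

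The main obstacle I anticipate is the rigorous extraction of the good subsequence together with the limit passage in the degenerate dissipation term: one must be careful that $e^{-\chi v}$ stays bounded below along the (bounded) orbit so that $\int_\Omega e^{-\chi v}u|\nabla(\ln u-\chi v)|^2\to 0$ genuinely controls $\|\nabla(\ln U_\infty-\chi V_\infty)\|_{L^2}$, and that the time-integrated smallness can be converted to a pointwise-in-time statement at the selected $t_k$ (this is where choosing $t_k$ outside an exceptional set of small measure, or using a mean-value argument on $\int_{t_k}^{t_k+1}$, is needed). Once the lower bound $e^{-\chi v}\ge e^{-\chi C}>0$ from Lemma~\ref{boundedness-1} is in hand this is routine, but it is the step that genuinely uses the global boundedness hypothesis rather than just energy estimates.
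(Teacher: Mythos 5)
Your proposal follows essentially the same route as the paper: uniform parabolic Schauder bounds give relative compactness of the orbit in $[C^2(\bar{\Omega})]^2$, the integrated dissipation identity \eqref{3-3} lets one select times $t_k\to\infty$ along which $\int_\Omega v_t^2(\cdot,t_k)\,dx$ and $\int_\Omega e^{-\chi v}u|\nabla(\ln u-\chi v)|^2(\cdot,t_k)\,dx$ vanish, and continuity of $F$ under $C^2$-convergence yields \eqref{s-2}.

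Two details need repair. First, you invoke Lemma \ref{ln-e} for the lower bound $F\geq -c_3$, but that lemma assumes the subcritical mass condition $\int_\Omega u_0\,dx<\frac{4\pi}{\chi}$, whereas Lemma \ref{S-P} is applied precisely in the supercritical regime where no such bound is available a priori; the correct source of the lower (and upper) bound on $F(u(\cdot,t),v(\cdot,t))$ for $t\geq 1$ is the hypothesis that the solution is globally bounded, fed through the uniform $C^{2+\sigma,1+\sigma/2}$ estimates \eqref{he} --- which is exactly what the paper does. Second, the vanishing of the dissipation only gives $U_\infty|\nabla(\ln U_\infty-\chi V_\infty)|^2=0$, so to conclude $\nabla(\ln U_\infty-\chi V_\infty)\equiv 0$ you must rule out zeros of $U_\infty$: positivity of $u(\cdot,t)$ at each finite time does not automatically survive the limit, and your appeal to ``the positivity'' is unjustified as written; the paper closes this by invoking the strict-positivity argument of \cite[Lemma 3.1]{W-2010}. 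A final bookkeeping point: the limit of $v(\cdot,t_k)$ satisfies $\int_\Omega V_\infty\,dx=M_0$, so it is the shifted function $V_\infty-M_0/|\Omega|$ that meets the normalization $\int_\Omega V\,dx=0$ required in \eqref{s-1}; this is harmless but should be stated.
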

\begin{proof}
 Since $(u,v)$ is the global classical solution with uniform-in-time bound of the system \eqref{1-1}, {\color{black}then we can use the standard bootstrap arguments involving interior parabolic regularity theory \cite{Lady-1968} to find a constant $c_1>0$ independent of $t$  such that
 \begin{equation}\label{he}
 \|u(\cdot,t)\|_{C^{2+\sigma,1+\frac{\sigma}{2}}(\bar{\Omega}\times[1,\infty))}
 +\|v(\cdot,t)\|_{C^{2+\sigma,1+\frac{\sigma}{2}}(\bar{\Omega}\times[1,\infty))}\leq c_1,
 \end{equation}
 where $\sigma\in(0,1)$. From \eqref{he}, we know that $(u(\cdot,t),v(\cdot,t))_{t>1}$ is relatively compact in $[C^2(\bar{\Omega})]^2$ and $F(u,v)$ is bounded for $t>1$. Hence there exists a suitable time sequence  $t_k\to\infty$ such that$(u(\cdot,t_k),v(\cdot,t_k)) \to (U_\infty,V_\infty)$ in $[C^2(\bar{\Omega})]^2$ for some nonnegative $U_\infty,V_\infty\in C^2(\bar{\Omega})$. Then we have $$F(u(\cdot,t_k),v(\cdot,t_k))\to F(U_\infty,V_\infty), \ \mathrm{as} \ t_k\to\infty,$$
which gives \eqref{s-2} by the fact $F(u,v)\leq F(u_0,v_0)$ from \eqref{3-3}. On the other hand, using the facts $0<c_2\leq e^{-\chi v}$ and $F(u,v)$ is bounded for $t>1$, from Lemma $\ref{L-Y}$, one has
\begin{equation}\label{he-1}
\int_1^\infty\int_{\Omega}v_t^2dxds
+\int_1^\infty\int_{\Omega}u|\nabla(\ln u-\chi v)|^2dxds\leq c_3.
\end{equation}
Then the combination of \eqref{he} and \eqref{he-1} entails us to extract a subsequence of $(t_k)_{k\geq 1}$ (with the same notation if necessary)  such that
\begin{equation}\label{he-2}
\int_\Omega v_t^2(x,t_k)dx \to 0
\ \ \mathrm{as}\ \ t_k\to\infty
\end{equation}
and
\begin{equation}\label{he-3}
\int_{\Omega}u(x,t_k)|\nabla(\ln u(\cdot,t_k)-\chi v(\cdot,t_k))|^2dx \to 0
\ \ \mathrm{as}\ \ t_k\to\infty.
\end{equation}
Based on \eqref{he-2} and \eqref{he-3}, then using the same argument as in \cite[Lemma 3.1]{W-2010}, we can show that $(U_\infty,V_\infty)$ is a solution  of (\ref{s-1}).
}
%we can use  Schauder regularity theory (e.g. see \cite{Gilbard-Ttrudinger}), to show that $(u(\cdot,t),v(\cdot,t))_{t>1}$ is relatively compact in $[C^2(\bar{\Omega})]^2$. Hence there exists a suitable time sequence  $(t_k)_{k\geq 1}$ such that $(u(\cdot,t_k),v(\cdot,t_k)) \to (U_\infty,V_\infty)$ in $[C^2(\bar{\Omega})]^2$ as $t_k\to\infty$.
 %Moreover, using the boundedness of $(u,v)$, we can find a constant $c_1>0$ such that
 %\begin{equation*}
 %F(u,v)
 %=\int_{\Omega}u\ln udx +\frac{\chi}{2}\int_\Omega ( v^2+\abs{\nabla v}^2)dx-\chi\int_{\Omega}uvdx\\
 %\geq -c_1,
 %\end{equation*}
%which combined with Lemma $\ref{L-Y}$ implies that
%\begin{equation*}\label{s-3}
%\chi\int_0^\infty\int_{\Omega}v_t^2dxd\tau
%+\int_0^\infty\int_{\Omega}e^{-\chi v}u|\nabla(\ln u-\chi v)|^2dxds<\infty.
%\end{equation*}
%Then applying the  Arzel$\grave{a}$-Ascoli theorem,  we can find a time sequence, still denoted by $(t_k)_{k\geq 1}$, such that
%\begin{equation*}\label{s-4}
%v_t(\cdot,t_k)\to 0  ~~\mathrm{in}~L^2(\Omega)
%\end{equation*}
% and
% \begin{equation}\label{s-5}
% e^{-\chi v(\cdot,t_{k})}u(\cdot,t_k)|\nabla(\ln u(\cdot,t_k)-\chi v(\cdot,t_k))|^2\to 0 ~~\mathrm{a.e.~in}~~\bar{\Omega}
% \end{equation}
%
In fact, noting \eqref{he-2}, we evaluate the second equation of $\eqref{1-1}$ at $t=t_k$ and let $k\to\infty$ to have
 \begin{equation}\label{s-6}
 -\Delta V_\infty+ V_\infty=U_\infty-\bar{u}.
 \end{equation}
 Using $\eqref{he-3}$ and taking $k\to\infty$, we obtain
$
U_\infty|\nabla(\ln U_{\infty}-\chi V_{\infty})|^2=0~\mathrm{in}~\bar{\Omega}.
$
By using the  same argument as in \cite[Lemma 3.1]{W-2010}, one can show that $U_{\infty}>0$ for all $x\in \bar{\Omega}$ and hence
$
\nabla(\ln U_{\infty}-\chi V_{\infty})=0~\mathrm{in}~\bar{\Omega}
$
which gives
\begin{equation}\label{s-7}
U_{\infty}=\frac{M_0 e^{\chi V_{\infty}}}{\int_\Omega e^{\chi V_{\infty}}dx} .
\end{equation}
Then combining $\eqref{s-6}$ and $\eqref{s-7}$, and using the fact $\bar{u}=\frac{M_0}{|\Omega|}$, we know that $(U_\infty,V_\infty)$ is a solution of \eqref{s-1}. %Furthermore, because $(u(\cdot,t_k),v(\cdot,t_k)) \to (U_\infty,V_\infty)$ in $[C^2(\bar{\Omega})]^2$ and thus $$F(u(\cdot,t_k),v(\cdot,t_k))\to F(U_\infty,V_\infty), \ \mathrm{as} \ t_k\to\infty,$$
%we have \eqref{s-2} by the fact $F(u,v)\leq F(u_0,v_0)$ from \eqref{3-3}.
 Then, the proof of Lemma \ref{S-P} is completed.
 \end{proof}
 With Lemmas \ref{S-B}, \ref{K-B} and  \ref{S-P} in hand, we now show the blowup of solutions under supercritical mass by the argument of contradiction.
\begin{lemma}\label{B-1}{\color{black} For any $M>\frac{4\pi}{\chi}$ and $M\not \in \{\frac{4\pi m}{\chi}: m\in\mathbb{N}^+\}$,  there exist initial value $(u_0,v_0)$ satisfying $\int_0u_0dx=M$ such that the corresponding solution of $\eqref{1-1}$ blows up.}
\end{lemma}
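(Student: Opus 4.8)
The plan is to argue by contradiction: I would produce admissible initial data of mass $M$ whose Lyapunov energy lies strictly below the constant $-K$ furnished by Lemma~\ref{S-B}, and then show that a global-in-time bounded solution issuing from such data would, through Lemma~\ref{S-P}, generate a stationary solution of \eqref{s-1} with energy below $-K$, contradicting Lemma~\ref{S-B}. All the analytic work is carried by the three preparatory lemmas, so what remains is an assembly plus one technical point about the sign of the test data.

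Concretely, fix $M_0:=M$ and let $K=K(M)$ be the constant of Lemma~\ref{S-B}, available because $M\notin\{4\pi m/\chi:m\in\mathbb{N}^+\}$. I would take the data from the family $(U_\varepsilon,V_\varepsilon)$ of \eqref{2-4-12}--\eqref{2-4-12*}, for which $\int_\Omega U_\varepsilon\,dx=M_0$ and, by Lemma~\ref{K-B}, $F(U_\varepsilon,V_\varepsilon)\to-\infty$ as $\varepsilon\to0$. Since $V_\varepsilon$ has zero mean it is generally sign-changing, so I would first note, directly from the explicit formula \eqref{2-4-12}, that $V_\varepsilon$ is radially decreasing about $x_0$, so its minimum over $\bar{\Omega}$ is attained at the point of $\bar{\Omega}$ farthest from $x_0$ and stays bounded below as $\varepsilon\to0$; hence there are $\varepsilon_1>0$ and a constant $c>0$ with $V_\varepsilon+c\ge0$ on $\bar{\Omega}$ for all $\varepsilon\in(0,\varepsilon_1)$. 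Using $\int_\Omega V_\varepsilon\,dx=0$ one computes
\begin{equation*}
F(U_\varepsilon,V_\varepsilon+c)=F(U_\varepsilon,V_\varepsilon)+\frac{\chi c^2|\Omega|}{2}-\chi c M_0,
\end{equation*}
so the shift perturbs the energy only by an $\varepsilon$-independent constant. By Lemma~\ref{K-B} I may then pick $\varepsilon_*\in(0,\varepsilon_1)$ with $F(U_{\varepsilon_*},V_{\varepsilon_*}+c)<-K$, and set $u_0:=U_{\varepsilon_*}$, $v_0:=V_{\varepsilon_*}+c$. The explicit formulas show $0\le(u_0,v_0)\in[W^{1,\infty}(\Omega)]^2$, while $\int_\Omega u_0\,dx=M$ and $F(u_0,v_0)<-K$.

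Now suppose, for contradiction, that the solution $(u,v)$ of \eqref{1-1} emanating from $(u_0,v_0)$ is global in time and bounded. Then Lemma~\ref{S-P} applies and yields a sequence $t_k\to\infty$ with $(u(\cdot,t_k),v(\cdot,t_k))\to(U_\infty,V_\infty)$ in $[C^2(\bar{\Omega})]^2$, where $(U_\infty,V_\infty)$ solves \eqref{s-1} with $\int_\Omega U_\infty\,dx=M_0=M$ (by the mass conservation \eqref{L1-u}) and $F(U_\infty,V_\infty)\le F(u_0,v_0)<-K$. Since $M\notin\{4\pi m/\chi:m\in\mathbb{N}^+\}$, Lemma~\ref{S-B} applies to $(U_\infty,V_\infty)$ and gives $F(U_\infty,V_\infty)\ge-K$, a contradiction. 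Hence no global-in-time bounded solution exists for these data; by the extensibility criterion of Lemma~\ref{LS} this means either $T_{max}<\infty$ with $\|u(\cdot,t)\|_{L^\infty}+\|v(\cdot,t)\|_{W^{1,\infty}}\to\infty$ as $t\nearrow T_{max}$, or $T_{max}=\infty$ while the solution fails to be uniformly bounded. In either case the solution blows up in finite or infinite time, which is the assertion of the lemma.

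The conceptual difficulty has been front-loaded into Lemmas~\ref{S-B}, \ref{K-B} and \ref{S-P} — in particular the compactness and convergence-to-equilibrium argument behind Lemma~\ref{S-P}, where the dissipation identity \eqref{3-3} together with the nondegeneracy $e^{-\chi v}\ge c>0$ of a bounded solution is used — so the only point needing care here is the bookkeeping for the nonnegativity shift: one must verify that $c$ can be chosen uniformly for small $\varepsilon$ and that the controlled perturbation of $F$ does not destroy the divergence $F\to-\infty$. I would also stress, consistent with the remark after Theorem~\ref{BS}, that the argument is deliberately agnostic about whether the blowup time is finite.
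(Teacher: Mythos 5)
Your proposal is correct and follows essentially the same route as the paper: combine the uniform lower bound $-K$ on steady-state energies (Lemma \ref{S-B}), the test family $(U_\varepsilon,V_\varepsilon)$ with $F\to-\infty$ (Lemma \ref{K-B}), and the convergence-to-equilibrium statement (Lemma \ref{S-P}) to reach the contradiction $-K\leq F(U_\infty,V_\infty)\leq F(u_0,v_0)<-K$. The one point where you go beyond the paper is the nonnegativity shift $v_0=V_{\varepsilon_*}+c$: the paper takes $v_0=V_{\varepsilon_0}$ directly even though this zero-mean function is necessarily sign-changing, whereas Theorem \ref{BS} assumes $v_0\geq 0$; your observation that $\min_{\bar\Omega}V_\varepsilon$ stays bounded below uniformly for small $\varepsilon$ (since $V_\varepsilon=\frac{1}{\chi}\bigl[-2\ln(\varepsilon^2+\pi|x-x_0|^2)+O(1)\bigr]$) and that the shift changes $F$ only by the $\varepsilon$-independent amount $\frac{\chi c^2|\Omega|}{2}-\chi cM_0$ is a legitimate and worthwhile repair of this small oversight.
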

\begin{proof}Since $M\not \in \{\frac{4\pi m}{\chi}: m\in\mathbb{N}^+\}$, then by Lemma $\ref{S-B}$, we can find a  constant $K>0$ such that
 \begin{equation}\label{bu-1}
 F(U_\infty,V_\infty)\geq- K,
\end{equation}
where $(U_\infty,V_\infty)$ is a solution of the system $\eqref{s-1}$. For this constant $K>0$ chosen in \eqref{bu-1}, we can use Lemma $\ref{K-B}$ to show that there exists a  small $\varepsilon_0>0$  such that
$
F(U_{\varepsilon_0},V_{\varepsilon_0})<-K,
$ provided $M>\frac{4\pi}{\chi}$,
where
\begin{equation*}
V_{\varepsilon_0}(x)=\frac{1}{\chi}\left[\ln\left(\frac{\varepsilon_0^2}{(\varepsilon_0^2+\pi|x-x_0|^2)^2}\right)-\frac{1}{|\Omega|}\int_\Omega\ln\left(\frac{\varepsilon_0^2}{(\varepsilon_0^2+\pi|x-x_0|^2)^2}\right)dx\right],
\end{equation*}
and
\begin{equation*}
U_{\varepsilon_0}(x)=\frac{Me^{\chi V_{\varepsilon_0}(x)}}{\int_\Omega e^{\chi V_{\varepsilon_0}(x)}dx}.
\end{equation*}
Moreover, we can  check that $(U_{\varepsilon_0},V_{\varepsilon_0})\in [W^{1,\infty}(\Omega)]^2$ and $\int_\Omega U_{\varepsilon_0}(x)dx=M$. Then the solution of the system \eqref{1-1} with initial data $(u_0,v_0)=(U_{\varepsilon_0},V_{\varepsilon_0})$ must blow up.  In fact, suppose the solution $(u,v)$ of $\eqref{1-1}$  with the above $(u_0,v_0)$ is uniformly bounded in time, then from Lemma $\ref{S-P}$, we have $F(U_\infty,V_\infty)\leq F(u_0,v_0)<-K$, which combined with \eqref{bu-1} raises the following contradiction:
$$-K\leq F(U_\infty,V_\infty)\leq F(u_0,v_0)<-K.$$
Then the Lemma \ref{B-1} is proved.
\end{proof}

\subsubsection{Proof of Theorem $\ref{BS}$}
Theorem $\ref{BS}$ is a direct consequence of Lemma $\ref{BS-1}$ and Lemma $\ref{B-1}$.

\bigbreak

\noindent \textbf{Acknowledgement}.
We are grateful to the referees for several helpful comments improving our results. The research of H.Y. Jin was supported  by  the NSF of China No. 11871226  and  the Fundamental Research Funds for the Central Universities. The research of Z.A. Wang was supported by the Hong Kong RGC GRF grant No. 15303019 (Project ID  P0030816).

\end{document}